\newtheorem{theorem}{Theorem}
\newtheorem{proposition}{Proposition}[section]
\newtheorem{lemma}{Lemma}[section]
\theoremstyle{remark}
\newtheorem{remark}{Remark}[section]
\theoremstyle{remark}
\newcommand{\expo}{\mathrm{e}}
\newcommand{\dd}{\mathrm{d}}
\title[From binomial reshuffling model to Poisson distribution of money]{From the binomial reshuffling model to Poisson
distribution of money}
\author{Fei Cao}
\author{Nicholas F. Marshall}
\address{University of Massachusetts Amherst, Department of Mathematics and
Statistics, Amherst, MA 01003, USA}
\email{fcao@umass.edu}
\address{Oregon State University, Department of Mathematics, Corvallis, OR
97331, USA}
\email{marsnich@oregonstate.edu}
\keywords{Econophysics, Markov chain, Agent-based model, Mean-field limit, Coupling,
Wasserstein metric}
\subjclass[2020]{82C22 (primary) and 91B80, 60J28 (secondary).}
\begin{document}
\maketitle

%\tableofcontents
\begin{abstract}

  We present a novel reshuffling exchange model and investigate its long time behavior. In this model, two individuals are picked randomly, and their wealth $X_i$ and $X_j$ are redistributed by flipping a sequence of fair coins leading to a binomial distribution denoted $B \circ (X_i+X_j)$. This dynamics can be considered as a natural variant of the so-called uniform reshuffling model in econophysics \cite{cao_entropy_2021,dragulescu_statistical_2000}. As the number of individuals goes to infinity, we derive its mean-field limit, which links the stochastic dynamics to a deterministic infinite system of ordinary differential equations. The main result of this work is then to prove (using a coupling argument) that the distribution of wealth converges to the Poisson distribution in the $2$-Wasserstein metric. Numerical simulations illustrate the main result and suggest that the polynomial convergence decay might be further improved.

\end{abstract}

\section{Introduction}

The starting point of our study is to consider a system of $N$ agents with wealth denoted by $X_1,\ldots,X_N$. At each iteration, two agents picked randomly (say $i$ and $j$) reshuffle their combined wealth by flipping a sequence of fair coins. Mathematically, this exchange rule can be written as:
\begin{equation} \label{binomial_reshuffling}
(X_i,X_j) \;\leadsto\; (B \circ (X_i + X_j)\;,\; X_i + X_j - B \circ (X_i + X_j)),
\end{equation}
where $B \circ (X_i+X_j)$ is binomial random variable with parameters $X_i+X_j$ (combined wealth) and $1/2$ (fair coins). We refer to this dynamics as the {\bf binomial reshuffling model}. Notice that the combined wealth is preserved after the exchange, and hence the model is closed (i.e., the total wealth is conserved). The goal of this manuscript is to study the asymptotic limit of this dynamics as the number of agents and iterations become large. To gain an insight into the dynamics, we provide in figure \ref{fig:simu_agent_based} a numerical simulation with $N=10^4$ agents and after $10^7$ iterations. We observe the total wealth distribution is well approximated by a Poisson distribution whose rate parameter $\lambda$ is equal to the arithmetic mean of the agents' wealth ($\lambda \approx 5$ in the simulation).

\begin{figure}[pt]
  \centering
  \includegraphics[width=.97\textwidth]{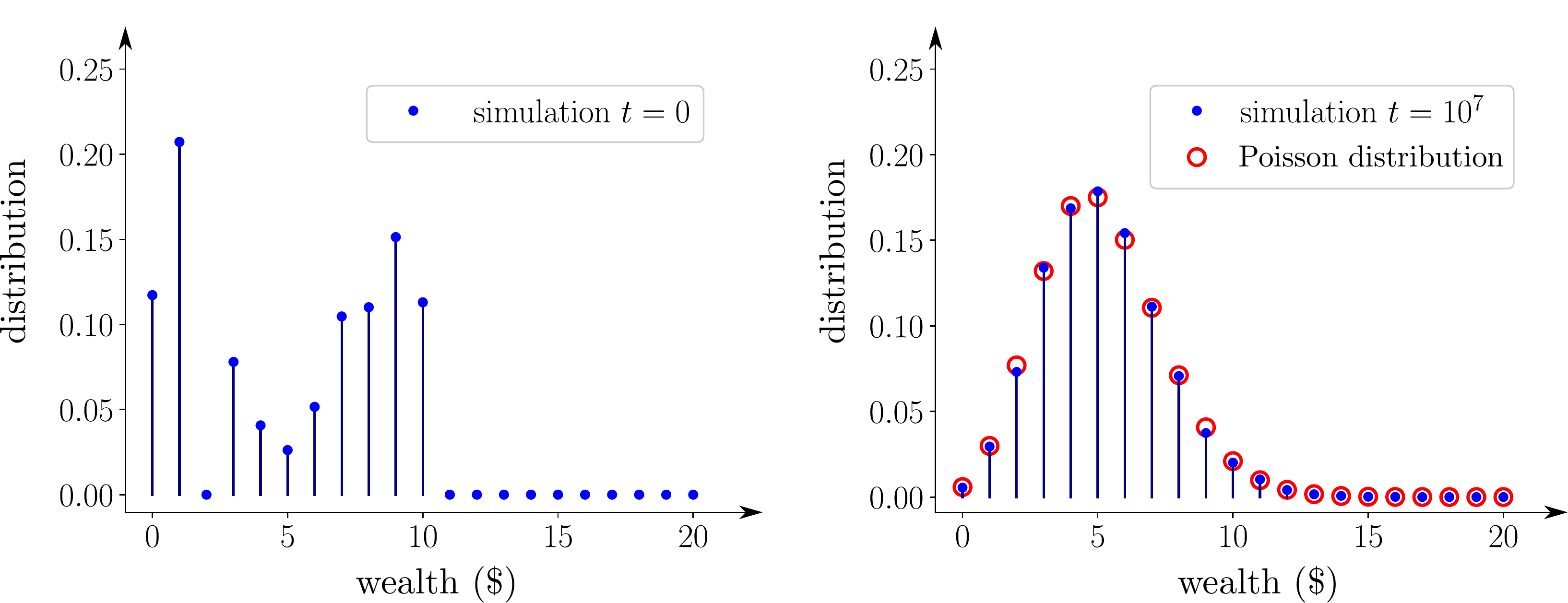}
  \caption{Illustration of the convergence of the wealth distribution to a Poisson distribution in the binomial reshuffling model. In the left figure, we represent the initial distribution used in the simulation. We observe in the right figure that the distribution is getting closer to a Poisson distribution. Parameters used: $10^7$ iterations, $N=10^4$ agents.}
\label{fig:simu_agent_based}
\end{figure}

\begin{figure}[pt]
  \centering
  \includegraphics[width=.97\textwidth]{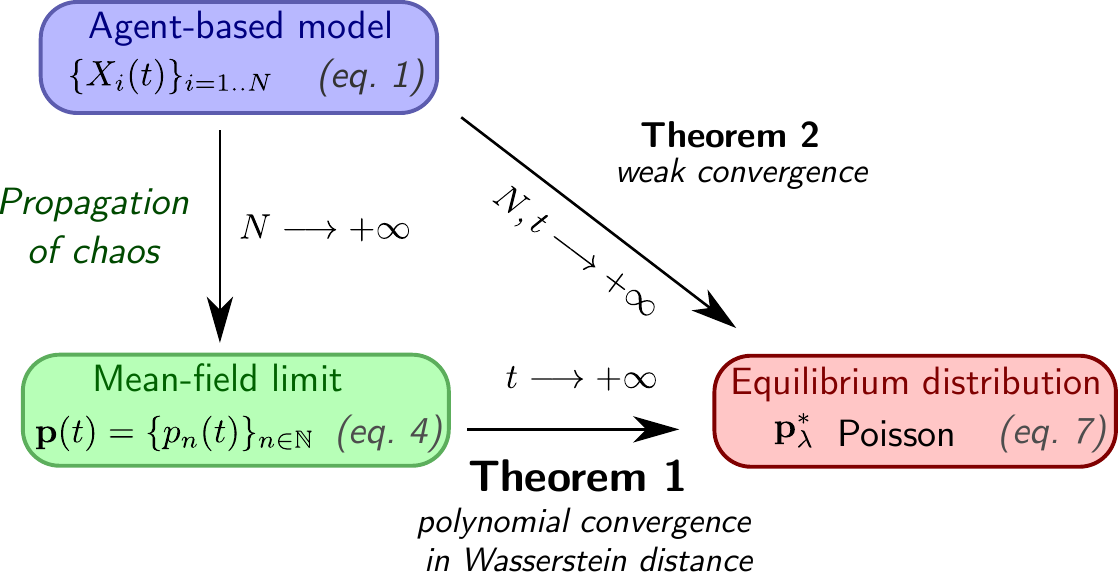}
  \caption{The main result of the manuscript is to show the convergence of the mean-field limit of the binomial reshuffling model toward a Poisson distribution with an explicit convergence rate (Theorem \ref{thm1}). Moreover, we also show the convergence of the original agent-based model toward an equilibrium distribution (Theorem \ref{thm2}) but without an explicit rate.}
\label{fig:schema}
\end{figure}

Our main result, Theorem \ref{thm1}, formalizes the empirical observation illustrated in Figure \ref{fig:simu_agent_based}. We consider the mean-field behavior of binomial reshuffling \eqref{binomial_reshuffling} in the large population limit ($N \to \infty$) and prove convergence of the distribution of wealth to a Poisson distribution in the $2$-Wasserstein metric. We also provide in Theorem \ref{thm2} a direct proof of the convergence of the agent-based model toward the Poisson distribution. However, in contrast to Theorem \ref{thm1}, the Theorem \ref{thm2} does not provide a convergence rate toward the equilibrium distribution. We summarize our results in Figure \ref{fig:schema}.

\subsection{Related work}

Before starting our investigation of the binomial reshuffling model, we would like to emphasize its link with other models in econophysics. We start by recalling the uniform reshuffling model \cite{cao_entropy_2021}. In this dynamics, a pair of agents $i,j$ is chosen randomly and their combined wealth is redistributed according to a uniform distribution. Thus, the update rule is given as follows:
\begin{equation} \label{uniform_reshuffling}
(X_i,X_j) \leadsto \left(U\circ (X_i + X_j),X_i + X_j - U\circ (X_i +
X_j)\right),
\end{equation}
where $U\circ (X_i + X_j)$ denotes a uniform random variable on $[0, X_i + X_j]$. The uniform distribution has a larger variance than the binomial distribution $B\circ (X_i + X_j)$. As a result, the uniform reshuffling model generates more {\it wealth inequality} (measured by the so-called Gini index) compared to the binomial reshuffling model. The associated equilibrium is an exponential law instead of a Poisson distribution.  Notice also that in contrast to the binomial reshuffling model, the wealth of the agents is a real non-negative number and no longer an integer (i.e., $X_i(t) \in \mathbb{R}^+$).

In contrast to the uniform reshuffling model, the repeated average model \cite{cao_explicit_2021,chatterjee_phase_2022} reduces wealth inequality. In this dynamics, the combined wealth of two agents is simply shared equally, leading to the following update rule:
\begin{equation}
\label{Dirac_reshuffling}
\hspace{-0.55in} (X_i,X_j) \leadsto \left(\delta_{1/2}\circ (X_i + X_j),X_i + X_j - \delta_{1/2}\circ (X_i + X_j) \right),
\end{equation}
in which $\delta_{1/2} \circ (X_i + X_j)$ denotes a Dirac delta centered at $(X_i + X_j)/2$. The long time behavior of such dynamics is a Dirac distribution, i.e., the wealth of all agents are equal, and the Gini index converges to zero \cite{cao_explicit_2021}. We illustrate the three different dynamics in Figure \ref{fig:three_shuffling_model}. The binomial reshuffling model could be seen as an intermediate behavior between the uniform reshuffling model and the repeated average dynamics.

\begin{figure}[ht]
  \centering
  \includegraphics[width=.9\textwidth]{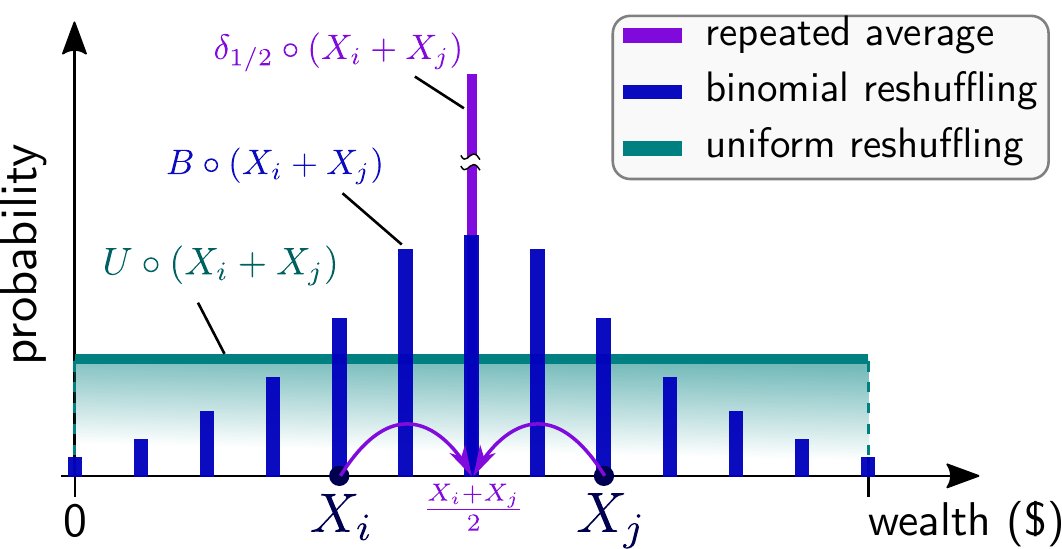}
  \caption{Illustration of the different update rules for three shuffling dynamics. In the repeated average model, the rule is deterministic: the updated wealth of agents $X_i$ and $X_j$ is the average of their combined wealth. In contrast, in the uniform reshuffling model, the updated value is taken from a uniform distribution on $[0,X_i+X_j]$. The binomial reshuffling has an intermediate behavior: the updated value is more {\it likely} to be around the average $(X_i+X_j)/2$.}
\label{fig:three_shuffling_model}
\end{figure}

Modifications of these models, which lead to different dynamics, also exist.
For example, the so-called immediate exchange model introduced in
\cite{heinsalu_kinetic_2014} assumes that pairs of agents are randomly and
uniformly picked at each random time, and each of the agents transfers a random
fraction of its money to the other agents, where these fractions are
independent and uniformly distributed in $[0,1]$. The so-called uniform
reshuffling model with saving propensity  investigated in
\cite{chakraborti_statistical_2000,lanchier_rigorous_2018} suggests that the
two interacting agents keep a fixed fraction $\lambda$ of their fortune and
only the combined remaining fortune is uniformly reshuffled between the two
agents:
$$
(X_i,X_j) \leadsto \big(U \circ (\lambda(X_i+X_j)) + (1-\lambda) X_i\;\;,\;\; \lambda X_i+X_j- U \circ (\lambda(X_i + X_j) )\big).
$$
The uniform reshuffling model arises as a particular case if we set $\lambda =
0$. For other models arising from econophysics (including models with bank and
debt), see
\cite{cao_interacting_2022,cao_uncovering_2022,chakraborti_statistical_2000,
chatterjee_pareto_2004, lanchier_rigorous_2019} and references therein.

\subsection{Main result}
In order to state our main result, we need to formalize a notion of mean-field
behavior as the number of agents becomes large. If we assume that updates occur
at random times generated by a Poisson clock
with rate $1/n$, then \eqref{binomial_reshuffling} defines a continuous-time Markov
process $\{X_1(t),\ldots,X_N(t)\}$ for $t \ge 0$, for any initial distribution of wealth.
Let $\mathbf{p}(t)=\left(p_0(t),p_1(t),\ldots,p_n(t),\ldots\right)$
be the law of the process $X_1(t)$ as $N \to \infty$, that is, $p_n(t) =
\lim_{N \to \infty}
\mathbb{P}(X_1(t) = n)$. Then, using standard techniques, we show in
 \S \ref{sec:mean_field} that the time evolution of $\mathbf{p}(t)$ is given by

\begin{equation}\label{eq:ODE}
\frac{\dd}{\dd t} {\mathbf{p}}(t) = Q[{\mathbf{p}}(t)]
\end{equation}
where
\begin{equation} \label{eq:defQ}
Q[{\mathbf{p}}]_n = \sum_{k= 0}^\infty \sum_{\ell = 0}^\infty
\tbinom{k+\ell}{n}\,\tfrac{1}{2^{k+\ell}}\,p_k\,p_\ell\,\mathbbm{1}_{\{n \leq
k+\ell\}} - p_n,
\end{equation}
for $n \ge 0$, with the usual convention that $\tbinom{0}{0}$ is interpreted as
$0$. The transition between the stochastic $N$-agents dynamics \eqref{binomial_reshuffling}
and the infinite system of ordinary differential equations (ODE)
\eqref{eq:ODE} as $n \to \infty$ is referred to as \emph{propagation of chaos}
\cite{sznitman_topics_1991} and has been rigorously justified in various models
arising from econophysics, see for instance
\cite{cao_derivation_2021,cao_entropy_2021,cao_interacting_2022,cortez_uniform_2022,graham_rate_2009,merle_cutoff_2019}.
Given the transition from the interacting system of agents
\eqref{binomial_reshuffling} to the deterministic system of nonlinear ODE \eqref{eq:ODE}, the natural follow-up step is to
investigate the large time behavior of the system of differential equations and
equilibrium solution.

Finally, we also recall that the $2-$Wasserstein metric between two probability mass
functions $\mathbf{p}$ and $\mathbf{q}$ is defined by
\begin{equation} \label{w2defn}
W_2(\mathbf{p},\mathbf{q}) = \inf\left\{\sqrt{\mathbb{E}[|X-Y|^2]} : \mathrm{Law}(X)=\mathbf{p},~\mathrm{Law}(Y)=\mathbf{q}\right\},
\end{equation}
where the infimum is taken over all pairs of random variables $X$ and $Y$
distributed according to $\mathbf{p}$ and $\mathbf{q}$, respectively. Moreover,
let $\mathbf{p}_\lambda^*$ denote a Poisson distribution with rate $\lambda >
0$, that is,
\begin{equation}
  \label{eq:Poisson}
  p_{\lambda,k}^* = \frac{\lambda^k e^{-\lambda}}{k!},
\end{equation}
for $k \in \mathbb{N}$. The following Theorem is our main result.

\begin{theorem}\label{thm1}
Let $\mathbf{p}(0)$ be a probability distribution on $\mathbb{N}$  with mean
$\lambda$ and finite variance $\sigma^2$, and suppose that $\mathbf{p}(t)$ be
defined by \eqref{eq:ODE}. Then, \begin{equation}\label{Wasserstein_conv}
W_2(\mathbf{p}(t) ,\mathbf{p}_\lambda^*) \le C t^{-1/2},
\end{equation}
where $C > 0$ is a constant that only depends on the initial variance
$\sigma^2$.
\end{theorem}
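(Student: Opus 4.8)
The plan is to recast the dynamics in operator form and run a synchronous coupling against a stationary Poisson process. First I would observe that \eqref{eq:ODE}--\eqref{eq:defQ} can be written as $\tfrac{\dd}{\dd t}\mathbf{p} = \Phi(\mathbf{p}) - \mathbf{p}$, where $\Phi(\mathbf{p})$ is the binomial-$\tfrac12$ thinning of the self-convolution $\mathbf{p}*\mathbf{p}$; that is, $\Phi(\mathbf{p})_n = \sum_{m\ge n}\tbinom{m}{n}2^{-m}(\mathbf{p}*\mathbf{p})_m$. Since the $\tfrac12$-thinning of a $\mathrm{Poisson}(2\lambda)$ law is $\mathrm{Poisson}(\lambda)$, and $\mathbf{p}^*_\lambda * \mathbf{p}^*_\lambda = \mathbf{p}^*_{2\lambda}$, the distribution $\mathbf{p}^*_\lambda$ is a fixed point of $\Phi$; moreover a direct computation on \eqref{eq:defQ} shows the mean is conserved, so $\mathbf{p}(t)$ has mean $\lambda$ for all $t$.

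Next I would build a coupling realizing the nonlinear process. Let $X(t)\sim\mathbf{p}(t)$ and let $Y(t)\sim\mathbf{p}^*_\lambda$ be stationary, both driven by a common rate-$1$ Poisson clock. At each clock ring I draw a partner pair $(X',Y')$ as an independent copy of the current pair $(X,Y)$, so that $X'\sim\mathbf{p}(t)$ and $Y'\sim\mathbf{p}^*_\lambda$, and I update $X\mapsto B\circ(X+X')$ and $Y\mapsto B\circ(Y+Y')$ using \emph{nested fair coins}: the binomial with the smaller total reuses a subset of the coins of the larger. Writing $V(t)=\mathbb{E}[(X(t)-Y(t))^2]$, this is a genuine (sub-optimal) coupling of $\mathbf{p}(t)$ and $\mathbf{p}^*_\lambda$, so $W_2(\mathbf{p}(t),\mathbf{p}^*_\lambda)^2\le V(t)$, and it suffices to control $V$.

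The key computation is the generator of $V$. The nested-coin coupling gives, for integers $a,b$, the exact identity $\mathbb{E}[(B\circ a-B\circ b)^2\mid a,b]=\tfrac14(|a-b|+(a-b)^2)$. Applying this with $a=X+X'$ and $b=Y+Y'$, and setting $Z=(X-Y)+(X'-Y')=a-b$, I would use that $D:=X-Y$ and $D':=X'-Y'$ are independent with $\mathbb{E}[D]=\mathbb{E}[D']=\lambda-\lambda=0$, whence $\mathbb{E}[Z^2]=2V$. Since a single jump rule acts at rate one, the expected change collapses to
\begin{equation}
\frac{\dd}{\dd t}V(t)=\tfrac14\mathbb{E}|Z|+\tfrac14\mathbb{E}[Z^2]-V=-\tfrac14\,\mathbb{E}\big[|Z|(|Z|-1)\big],
\end{equation}
the last equality again using $\mathbb{E}[Z^2]=2V$. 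Because $Z$ is integer-valued the dissipation $\mathbb{E}[|Z|(|Z|-1)]$ is nonnegative, which already yields $V$ nonincreasing; the real work is to make it strictly negative at a quantitative rate.

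The heart of the argument, and the step I expect to be hardest, is a lower bound of the form $\mathbb{E}[|Z|(|Z|-1)]\ge c\,V^2$ valid for the two-fold convolution $Z=D+D'$ of any integer, mean-zero law with $\mathbb{E}[D^2]=V$ small. The delicate point is that with only a variance bound the $W_2$-mass could a priori hide in rare large deviations of $D$; however, a value $|D|=k$ contributes order $k^2$ both to $V$ and (through the event $\{D'=0\}$) to the dissipation, so large jumps are self-defeating and the genuine bottleneck is mass concentrated at $D=\pm1$, for which a direct calculation gives $\mathbb{E}[|Z|(|Z|-1)]=V^2$ exactly. Splitting the mass of $D$ at $\pm1$ from the mass at $|k|\ge2$ and estimating the two regimes separately should yield $\mathbb{E}[|Z|(|Z|-1)]\ge c\,V^2$ for $V$ below an absolute threshold. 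Feeding this into the previous display gives $\tfrac{\dd}{\dd t}V\le-\tfrac{c}{4}V^2$, and comparison with the resulting Riccati equation yields $V(t)\le C/t$; the monotonicity $\tfrac{\dd}{\dd t}V\le 0$ together with the crude estimate $V(0)\lesssim\sigma^2+\lambda$ controls the initial transient and produces a constant depending only on $\sigma^2$. Finally $W_2(\mathbf{p}(t),\mathbf{p}^*_\lambda)\le\sqrt{V(t)}\le C t^{-1/2}$, as claimed. I expect the polynomial (rather than exponential) rate to be an artifact of the $D=\pm1$ bottleneck in this last estimate, consistent with the numerical observation that the true decay may be faster.
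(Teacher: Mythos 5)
Your proposal follows essentially the same route as the paper: the same synchronous coupling of the nonlinear process with a stationary Poisson copy using nested fair coins, the same generator identity $\tfrac{\dd}{\dd t}\mathbb{E}[(X-\overline X)^2]=\tfrac14\mathbb{E}[R]-\tfrac12\mathbb{E}[(X-\overline X)^2]=-\tfrac14\mathbb{E}[R(R-1)]$, and the same identification of mass at $\pm1$ as the bottleneck forcing a $V^2$ dissipation and hence a Riccati-type $1/t$ bound. The one step you leave as a sketch --- the lower bound $\mathbb{E}[|Z|(|Z|-1)]\ge c\,V^2$ via splitting the mass of $D$ at $\pm1$ versus $|k|\ge 2$ --- is precisely the paper's Case i)/Case ii) dichotomy on $\mathbb{P}(|X-\overline X|=1)$ (with $c=\sqrt{2/3}$); the only caveat is that the initial transient is not controlled by monotonicity alone, and the paper instead uses Cauchy--Schwarz to get a \emph{linear} dissipation $\ge c\min\{V,V^2\}$ valid for all $V$, which is what actually drives $V$ below your threshold.
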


The proof of Theorem \ref{thm1} is given in \S \ref{sec:3}.
Informally speaking, this result says that when the number of agents and the
number of iterations is large, the distribution of wealth of the agents under
the binomial reshuffling model converges to a Poisson distribution, see Figures
\ref{fig:simu_agent_based} and \ref{fig:schema}. We note that numerics indicate that it may be possible to
improve the convergence rate of Theorem \ref{thm1}, at least for some initial
probability distributions $\mathbf{p}(0)$, see the discussion in \S
\ref{sec:discuss}.

\subsection{Organization}
The remainder of the present paper is organized as follows. In \S
\ref{sec:mean_field}, using classical techniques, we show that the nonlinear system
of nonlinear ODEs \eqref{eq:ODE} is indeed the mean-field limit of binomial
reshuffling dynamics in the large $N$ limit. In \S \ref{sec:3} we establish several
results about the large time behavior of the nonlinear ODE system
\eqref{eq:ODE}, and ultimately prove Theorem \ref{thm1} using a coupling argument
inspired by recent work on the uniform reshuffling model
\cite{cao_entropy_2021}. In \S \ref{sec:another_route} we take on a different
approach similar to the methods proposed in
\cite{lanchier_rigorous_2017,lanchier_rigorous_2018,lanchier_rigorous_2019},
and show a different way to establish the convergence to the Poisson
distribution. In \S \ref{sec:discuss} we discuss the presented results. The
Appendix \ref{appendix} records a qualitative way of demonstrating the large
time convergence of the solution of \eqref{eq:ODE} to a Poisson distribution.

\section{Mean-field limit}\label{sec:mean_field} \subsection{Notation}
Let $\mathbb{N}$ denote the set of nonnegative integers $\mathbb{N} =
\{0,1,2,\ldots,\}$, and bold lower case letter  $\mathbf{p} = \{p_n\}_{n \in
\mathbb{N}}$ denote probability distributions on $\mathbb{N}$.
We say that $B$ is a Bernoulli
random variable if $\mathbb{P}(B = 0) = \mathbb{P}(B = 1) = 1/2$. For random
variables $X$ and $Y$ taking values in $\mathbb N$, we write $X \perp Y$ to
mean that $X$ and $Y$ are mutually independent. We say that $X$ is a binomial
random variable with parameters $n$ and $\gamma$ if the distribution
$\mathbf{p}$ of $x$ satisfies
$$
p_k = {n \choose k} \gamma^k (1-\gamma)^{n-k},
$$
for $k=0,\ldots,n$, and $p_k=0$ otherwise.
If $X$ and $Y$ are random
variables taking values in the nonnegative integers, then we write $B \circ
(X+Y)$ to denote a binomial random variable with parameters $X+Y$ and $1/2$,
put differently,
\begin{equation}
  \label{eq:B_XY}
  B \circ (X + Y) = \sum_{n=1}^{X+Y} B_n,
\end{equation}
where $\{B_n\}_{n \in \mathbb{N}}$ are independent Bernoulli random variables
(which are independent from $X$ and $Y$).

\subsection{Mean-field limit}
In the following, we provide a heuristic derivation of the mean-field ODE
system \eqref{eq:ODE} from the binomial reshuffling dynamics
\eqref{binomial_reshuffling}; the derivation is based on classical techniques,
see for example
\cite{cao_derivation_2021,cao_entropy_2021,cao_uncovering_2022}.

Let $\mathrm{N}^{(i,j)}_t$ be independent Poisson processes with intensity
$1/N$. Then, the dynamics can be written as:
\begin{equation}
  \label{eq:SDE_binomial}
  \dd X_i(t) = \sum_{\substack{j=1 \\ j\neq i}}^N \left(\sum_{k=1}^{X_i(t-)+X_j(t-)} \!\!\!\!\!\!B_k(t) \;\;\;\;-\;\; X_i(t-)\right) \dd \mathrm{N}^{(i,j)}_t,
\end{equation}
with $\{B_k(t)\}_{k \in \mathbb{N},t>0}$ being a collection of independent Bernoulli random variables. Using our notation \eqref{eq:B_XY}, one can write:
\begin{equation}
  \label{eq:SDE_binomial2}
  \dd X_i(t) = \sum_{\substack{j=1 \\ j\neq i}}^N \Big(B\circ(X_i(t-)+X_j(t-)) \;-\;  X_i(t-)\Big) \dd \mathrm{N}^{(i,j)}_t.
\end{equation}
As the number of agents $N$ goes to infinity, we would expect that the processes $\{X_i\}_{1\leq i\leq N}$ become (asymptotically) independent and of the same law. Therefore, the limit dynamics
would be of the form:
\begin{equation}
  \label{eq:SDE_binomiallimit}
  \dd \overline{X}(t) = \left(B\circ \left(\overline{X}(t-)\! + \!\overline{Y}(t-)\right) - \overline{X}(t-)\right) \dd \overline{\mathrm{N}}_t
\end{equation}
where $\overline{Y}$ is an independent copy of $\overline{X}$ and $\overline{\mathrm{N}}_t$ is a Poisson process with unit intensity. The proof of such convergence is referred to as {\it propagation of chaos}, and it is out of the scope of the manuscript. We refer to \cite{cao_derivation_2021,cao_interacting_2022,cortez_quantitative_2016,cortez_uniform_2022,merle_cutoff_2019,sznitman_topics_1991} for the readers interested in this topic. The Kolmogorov backward equation associated with the SDE \eqref{eq:SDE_binomiallimit} reads as
\begin{equation}
  \label{eq:KBE}
  \dd \mathbb{E}[\psi(\overline{X}(t))] = \mathbb{E}\left[\psi\left(B\circ \left(\overline{X}(t)\! + \!\overline{Y}(t)\right)\right) - \psi\left(\overline{X}(t)\right)\right]\,\dd t
\end{equation}
In other words, the limit dynamics corresponds to the following pure jump process:
\begin{equation}
  \label{binomial_limit}
  \overline{X} \leadsto B\circ (\overline{X}\!+\!\overline{Y}).
\end{equation}
To write down the evolution equation for the law of the process $\overline{X}(t)$ (denoted by ${\mathbf{p}}(t)$), we need the following elementary observation:
\begin{lemma}\label{lem1}
Suppose $X$ and $Y$ two i.i.d. random variables with probability mass function
${\mathbf{p}} = \{p_n\}_{n \in \mathbb N}$. Let $Z = \sum_{k=1}^{X+Y} B_k$ where
$\{B_k\}_{k \in \mathbb{N}}$ are a collection of independent Bernoulli random
variables, which are independent of $X$ and $Y$. Then,
$$
\mathbb{P}(Z = n) = \sum_{k=0}^\infty \sum_{\ell = 0}^\infty
\tbinom{k+\ell}{n}\,\tfrac{1}{2^{k+\ell}}\,p_k\,p_\ell\,\mathbbm{1}_{\{n \leq
k+\ell\}},
$$
for $n \in \mathbb{N}$.
\end{lemma}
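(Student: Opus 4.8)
The plan is to compute the probability $\mathbb{P}(Z = n)$ by conditioning on the values of $X$ and $Y$, which carry all the randomness feeding into the binomial sum. First I would write, by the law of total probability and independence of $(X,Y)$ from the Bernoulli sequence,
\begin{equation*}
\mathbb{P}(Z = n) = \sum_{k=0}^\infty \sum_{\ell=0}^\infty \mathbb{P}(X = k)\,\mathbb{P}(Y = \ell)\,\mathbb{P}\!\left(\textstyle\sum_{m=1}^{k+\ell} B_m = n\right),
\end{equation*}
where I have used $X \perp Y$ together with the fact that $\mathbb{P}(X=k) = \mathbb{P}(Y=k) = p_k$ since they are i.i.d. with mass function $\mathbf{p}$.

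The key observation is then that, conditionally on $X = k$ and $Y = \ell$, the random variable $Z = \sum_{m=1}^{k+\ell} B_m$ is precisely a sum of $k+\ell$ independent fair Bernoulli variables, hence a binomial random variable with parameters $k+\ell$ and $1/2$. By the definition of the binomial distribution recalled in the Notation subsection, this conditional probability equals $\binom{k+\ell}{n}\,(1/2)^n (1/2)^{k+\ell-n} = \binom{k+\ell}{n}\,2^{-(k+\ell)}$ whenever $0 \le n \le k+\ell$, and is zero otherwise. Substituting this into the double sum and recording the support constraint via the indicator $\mathbbm{1}_{\{n \le k+\ell\}}$ yields exactly the claimed formula.

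This argument is essentially a bookkeeping exercise, so I do not anticipate a genuine obstacle; the only point requiring a little care is the boundary case $k = \ell = 0$, where the empty sum gives $Z = 0$ deterministically and one must check consistency with the stated convention that $\binom{0}{0}$ is interpreted as $0$ in $Q$ (equivalently, the $n=0$ contribution of the $(0,0)$ term must be handled so that probabilities sum to one). I would verify normalization as a sanity check: summing the right-hand side over $n$ and using $\sum_{n=0}^{k+\ell} \binom{k+\ell}{n} 2^{-(k+\ell)} = 1$ collapses the expression to $\sum_{k,\ell} p_k p_\ell = 1$, confirming the formula defines a genuine probability mass function.
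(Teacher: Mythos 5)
Your proposal is correct and follows essentially the same route as the paper: both apply the law of total probability and use that, conditionally on the combined value $k+\ell$ (the paper conditions on $X+Y=m$ and then re-expands into a double sum, you condition on $(X,Y)=(k,\ell)$ directly), the variable $Z$ is binomial with parameters $k+\ell$ and $1/2$. The normalization sanity check and the remark on the empty-sum case $k=\ell=0$ are sensible additions but do not change the substance of the argument.
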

\begin{proof} By the law of total probability, we have
\begin{align*}
\mathbb{P}(Z = n) &= \sum_{m=n}^\infty \mathbb{P}\left(Z = n \mid
X+Y = m\right)\mathbb{P}(X+Y=m), \\
&= \sum_{m=n}^\infty \tbinom{m}{n}\,\tfrac{1}{2^m}\,\sum_{k \leq m} p_k\,p_{m-k}, \\
&= \sum_{k=0}^\infty \sum_{\ell=0}^\infty \tbinom{k+\ell}{n}\,\tfrac{1}{2^{k+\ell}}\,p_k\,p_\ell\,\mathbbm{1}_{\{n \leq k+\ell\}},
\end{align*}
which completes the proof. \end{proof}

It follows from Lemma \ref{lem1} that the  evolution equation for the law
$\mathbf{p}(t)$ of $\overline{X}(t)$ defined in  \eqref{eq:SDE_binomiallimit}
satisfies
\begin{equation}\label{eq:ODE_repeat} \frac{\dd}{\dd t}
{\mathbf{p}}(t) = Q[{\mathbf{p}}(t)],
\end{equation}
where
\begin{equation}\label{eq:Q_repeat}
Q[{\mathbf{p}}]_n = \sum_{k=0}^\infty \sum_{\ell = 0}^\infty \tbinom{k+\ell}{n}\,\tfrac{1}{2^{k+\ell}}\,p_k\,p_\ell\,\mathbbm{1}_{\{n \leq k+\ell\}} - p_n,
\end{equation}
for $n \in \mathbb{N}$. To conclude this section, we also record a simple
observation that provides intuition on the large time behavior. The result characterizes the dynamics in the case that the initial distribution is binomial.

\begin{lemma}
Let $X$ and $Y$ be independent binomial random variable with parameters $n \in
\mathbb{N}$ and $\mu/n \in [0,1]$. Then, $B \circ (X+Y)$ is a binomial random
variable with parameters $2n$ and $\mu/(2n)$.
\end{lemma}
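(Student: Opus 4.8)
The plan is to establish the result in two stages: first identify the law of $X+Y$, and then recognize the action of $B \circ (\cdot)$ as a binomial thinning. Since $X$ and $Y$ are independent binomials with the common success probability $\mu/n$, their sum is binomial with parameters $2n$ and $\mu/n$; this is the standard additivity of binomials sharing a success probability, which one sees immediately by writing $X$ and $Y$ each as a sum of $n$ independent Bernoulli($\mu/n$) trials and concatenating the two collections into $2n$ i.i.d.\ trials.

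Given this, I would set $M = X+Y \sim \mathrm{Binomial}(2n, \mu/n)$ and realize $M$ as the number of successes among $2n$ independent trials with success probability $\mu/n$, say $M = \sum_{i=1}^{2n} S_i$ with $S_i$ i.i.d.\ Bernoulli($\mu/n$). Using the definition \eqref{eq:B_XY}, the variable $B \circ M = \sum_{k=1}^{M} B_k$ assigns one fair coin to each success. The key step is the coupling that attaches a fair coin $C_i$ to each trial $i$ and writes $B \circ M = \sum_{i=1}^{2n} S_i C_i$, where all the $S_i$ and $C_i$ are mutually independent. Each term $S_i C_i$ is then the product of independent Bernoulli($\mu/n$) and Bernoulli($1/2$) variables, hence a Bernoulli($\mu/(2n)$) variable, and the $2n$ terms are independent. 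Summing yields $B \circ M \sim \mathrm{Binomial}(2n, \mu/(2n))$, as claimed.

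Alternatively, and more self-containedly, one can argue through probability generating functions. Conditioning on $M$, the generating function of $B \circ M$ is $\mathbb{E}\bigl[((1+z)/2)^M\bigr] = G_M((1+z)/2)$, where $G_M(s) = (1 - \mu/n + (\mu/n)s)^{2n}$ is the generating function of $M$. Substituting $s = (1+z)/2$ and simplifying collapses this to $(1 - \mu/(2n) + (\mu/(2n))z)^{2n}$, which is precisely the generating function of a $\mathrm{Binomial}(2n, \mu/(2n))$ law, so uniqueness of generating functions completes the argument.

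Since the statement is elementary, there is no genuine obstacle; the only point requiring a little care is to keep the two layers of randomness separated, namely the success or failure of each of the $2n$ trials and the subsequent fair coin, which the thinning coupling makes explicit. One could instead substitute the binomial mass functions directly into Lemma \ref{lem1} and evaluate the resulting double sum, but the thinning and generating-function arguments above avoid that more cumbersome computation.
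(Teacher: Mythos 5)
Your proof is correct, but it takes a genuinely different route from the paper. The paper proves the lemma by a direct manipulation of the probability mass function: it conditions on $Z = X+Y \sim \mathrm{Binomial}(2n,\mu/n)$, writes $\mathbb{P}(B\circ Z = k)$ as a single sum over the value of $Z$, applies the combinatorial identity $\binom{2n}{\ell}\binom{\ell}{k} = \binom{2n}{k}\binom{2n-k}{\ell-k}$, and collapses the remaining sum with the binomial theorem --- precisely the ``more cumbersome computation'' you chose to avoid. Your thinning coupling replaces that computation with a conceptual observation: realizing $X+Y$ as $2n$ i.i.d.\ Bernoulli$(\mu/n)$ trials and attaching an independent fair coin to each trial shows that $B\circ(X+Y)$ is a binomial thinning, i.e., each trial survives with probability $\mu/(2n)$. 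The one step that deserves the care you gave it is the identification of $\sum_{k=1}^{M} B_k$ (coins indexed by successes, as in \eqref{eq:B_XY}) with $\sum_{i=1}^{2n} S_i C_i$ (coins indexed by trials); this holds because, conditionally on the number of successes $M=m$, both expressions are sums of exactly $m$ independent fair coins, so they agree in law. Your generating-function variant is also correct and is essentially the paper's computation repackaged: the substitution $s \mapsto (1+z)/2$ in $G_M$ encodes the same binomial-theorem collapse. What the paper's approach buys is complete self-containedness at the level of binomial coefficients; what yours buys is a transparent explanation of \emph{why} the parameter halves, and it generalizes immediately to $B\circ$ acting on any binomial (or Poisson) input.
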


\begin{proof} Let $X$ be a binomial random variable with parameters $n$ and
$\mu/n$, and let $Y$ be an i.i.d. copy of $X$. If we set $Z = X + Y$, then $Z$ is a binomial random variable with parameters $2n$ and $\mu/n$. Thus, for all $k = 0,\ldots,2n$ we have
\begin{align*}
\mathbb{P}\left(B \circ Z = k\right) &= \sum_{\ell=k}^{2n} \binom{2n}{\ell}\left(\frac{\mu}{n}\right)^\ell\left(1-\frac{\mu}{n}\right)^{2n-\ell}\binom{\ell}{k}\frac{1}{2^\ell}, \\
&= \sum_{\ell=k}^{2n} \binom{2n}{k}\binom{2n-k}{\ell-k}\left(\frac{\mu}{2n}\right)^\ell\left(1-\frac{\mu}{n}\right)^{2n-\ell},\\
&=\binom{2n}{k}\left(\frac{\mu}{2n}\right)^k \sum_{\ell=k}^{2n} \binom{2n-k}{\ell-k}\left(\frac{\mu}{2n}\right)^{\ell-k}\left(1-\frac{\mu}{n}\right)^{2n-\ell},\\
&= \binom{2n}{k}\left(\frac{\mu}{2n}\right)^k \sum_{\tilde{\ell}=0}^{2n-k} \binom{2n-k}{\tilde{\ell}}\left(\frac{\mu}{2n}\right)^{\tilde{\ell}}\left(1-\frac{\mu}{n}\right)^{2n-k-\tilde{\ell}} ,\\
&= \binom{2n}{k}\left(\frac{\mu}{2n}\right)^k\left(1-\frac{\mu}{n}+\frac{\mu}{2n}\right)^{2n-k} \\
&= \binom{2n}{k}\left(\frac{\mu}{2n}\right)^k\left(1-\frac{\mu}{2n}\right)^{2n-k},
\end{align*}
whence the proof is finished. \end{proof}

\section{Large time behavior}\label{sec:3}

\subsection{Evolution of moments}
\label{subsec:3.1}

We begin by establishing several elementary properties of the nonlinear ODE
system \eqref{eq:ODE}. First, we show through straightforward calculation, that
the Poisson distribution is an equilibrium solution of \eqref{eq:ODE}.
At this stage, we do not argue the uniqueness of this equilibrium
solution, but argument presented in \S \ref{sec:another_route} implies that the
Poisson distribution  is indeed the unique equilibrium. %Recall that

\begin{lemma}  Suppose that $Q$ is defined by \eqref{eq:defQ}. Then,
  \begin{equation}
    \label{eq:equilibrium}
    Q[\mathbf{p}_\lambda^*] = 0,
  \end{equation}
  where $\mathbf{p}_\lambda^*$ is the Poisson distribution defined in \eqref{eq:Poisson}.
\end{lemma}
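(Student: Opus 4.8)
The plan is to show that the double-sum (gain) term in \eqref{eq:defQ} reproduces the Poisson mass function exactly, so that it cancels the loss term $-p_n$. The conceptual reason is the \emph{thinning} property of the Poisson distribution: if $X$ and $Y$ are i.i.d.\ Poisson$(\lambda)$, then $X+Y$ is Poisson$(2\lambda)$, and retaining each of its $X+Y$ units independently with probability $1/2$ (which is exactly what $B\circ(X+Y)$ does) yields a Poisson$(\lambda)$ random variable. Thus I would first invoke Lemma \ref{lem1} with $\mathbf{p}=\mathbf{p}_\lambda^*$ to identify
\begin{equation*}
\sum_{k=0}^\infty \sum_{\ell = 0}^\infty \tbinom{k+\ell}{n}\,\tfrac{1}{2^{k+\ell}}\,p_{\lambda,k}^*\,p_{\lambda,\ell}^*\,\mathbbm{1}_{\{n \leq k+\ell\}} = \mathbb{P}(Z=n), \qquad Z = B\circ(X+Y),
\end{equation*}
and then argue that $Z\sim\mathbf{p}_\lambda^*$, whence the gain term equals $p_{\lambda,n}^*$ and $Q[\mathbf{p}_\lambda^*]_n = p_{\lambda,n}^* - p_{\lambda,n}^* = 0$.

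To make this self-contained and avoid quoting the thinning property as a black box, I would verify the identity by direct computation. Grouping terms by the total mass $m = k+\ell$ and using $p_{\lambda,k}^* = \lambda^k e^{-\lambda}/k!$, the inner sum collapses via the binomial theorem,
\begin{equation*}
\sum_{k=0}^m \frac{\lambda^k}{k!}\,\frac{\lambda^{m-k}}{(m-k)!} = \frac{\lambda^m}{m!}\sum_{k=0}^m \binom{m}{k} = \frac{\lambda^m}{m!}\,2^m,
\end{equation*}
so that the factors of $2^{-m}$ and $2^m$ cancel. What remains is $e^{-2\lambda}\sum_{m\ge n}\binom{m}{n}\lambda^m/m!$; writing $\binom{m}{n}/m! = 1/(n!\,(m-n)!)$ and reindexing by $m-n$ recognizes the exponential series $\sum_{j\ge0}\lambda^j/j! = e^{\lambda}$, leaving exactly $\lambda^n e^{-\lambda}/n! = p_{\lambda,n}^*$.

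The only point requiring a word of care is the interchange of summation order (first summing over $k,\ell$, then regrouping by $m=k+\ell$, then separating the exponential series); since every term in \eqref{eq:defQ} is nonnegative, Tonelli's theorem justifies all the rearrangements, so there is no genuine analytic obstacle here. In short, the argument is a short, essentially computational one, and I expect the ``hard part'' to be nothing more than the bookkeeping of the change of summation variable $m=k+\ell$ together with the two elementary identities (the binomial theorem and the power series for $e^\lambda$); the substantive content is simply the recognition that binomial reshuffling acts on Poisson laws as Poisson thinning.
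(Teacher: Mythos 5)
Your proposal is correct and the computation you carry out (regrouping by $m=k+\ell$, collapsing the inner sum via the binomial theorem so the $2^{-m}$ cancels, then reindexing to recognize the exponential series) is exactly the paper's proof; the Poisson-thinning framing and the Tonelli remark are pleasant additions but do not change the argument.
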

\begin{proof} By changing variables in the double summation defining $Q$ we
have
\begin{eqnarray*}
Q[\mathbf{p}_\lambda^*]_n &=& \sum_{m=n}^\infty \sum_{j=0}^m {m \choose
n} \frac{1}{2^m} p_{\lambda,m-j}^* p_{\lambda,j}^* - p_{\lambda,n}^*, \\
&=& \sum_{m=n}^\infty \sum_{j =0}^m
\frac{m!}{(m-n)! n!} \frac{1}{2^m} \frac{\lambda^{m-j} e^{-\lambda}}{(m-j)!}
\frac{\lambda^j e^{-\lambda}}{j!} - p_{\lambda,n}^*, \\
&=& \sum_{m =n}^\infty \sum_{j = 0}^m
\frac{m!}{(m-n)! n!} \frac{1}{2^m} \frac{\lambda^{m-j} e^{-\lambda}}{(m-j)!}
\frac{\lambda^j e^{-\lambda}}{j!} - p_{\lambda,n}^*, \\
&=& \sum_{m =n}^\infty \frac{1}{(m-n)! n!} \lambda^{m} e^{-2\lambda} -
p_{\lambda,n}^* , \\
&=& \frac{\lambda^n e^{-\lambda}}{n!} \sum_{m =0}^\infty \frac{1}{m!}
e^{-\lambda} - p_{\lambda,n}^*, \\
&=& \frac{\lambda^n e^{-\lambda}}{n!} - p_{\lambda,n}^* =0,
\\
\end{eqnarray*}
which completes the proof.
\end{proof}

\begin{lemma}\label{prop1} Assume that ${\mathbf{p}}(t) = \{p_n(t)\}_{n \in
\mathbb{N}}$ is a classical and global in time solution of the system
\eqref{eq:ODE} whose initial probability mass function ${\mathbf{p}}(0)$ has
mean $\mu$ and finite variance $\sigma^2$. Then
\begin{equation}\label{eq:moments}
\frac{\dd}{\dd t} \sum_{n = 0}^\infty n\,p_n(t) = 0 \quad \text{and} \quad \frac{\dd}{\dd t} \sum_{n = 0}^\infty n^2\,p_n(t) = \frac{\mu^2+\mu}{2} - \frac 12\,\sum_{n = 0}^\infty n^2\,p_n(t).
\end{equation}
That is, the mean value of ${\mathbf{p}}(t)$ is preserved for all $t \geq 0$
and its second (non-centered) moment converges exponentially fast to $\mu^2 +
\mu$. \end{lemma}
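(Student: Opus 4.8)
The plan is to avoid manipulating the double sum in \eqref{eq:defQ} directly and to instead exploit the weak formulation of the dynamics recorded in the Kolmogorov backward equation \eqref{eq:KBE}. Write $m_1(t) = \sum_{n=0}^\infty n\,p_n(t)$ and $m_2(t) = \sum_{n=0}^\infty n^2\,p_n(t)$, and let $\overline{X},\overline{Y}$ be i.i.d.\ with law $\mathbf{p}(t)$. Testing \eqref{eq:KBE} against a function $\psi$ gives
$$
\frac{\dd}{\dd t}\mathbb{E}[\psi(\overline{X}(t))] = \mathbb{E}\big[\psi(B\circ(\overline{X}+\overline{Y}))\big] - \mathbb{E}[\psi(\overline{X})],
$$
and I would apply this identity first with $\psi(x)=x$ and then with $\psi(x)=x^2$.

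The computational heart of the matter is the pair of conditional moments of a fair binomial. Conditioning on $S := \overline{X}+\overline{Y}=s$, the variable $B\circ S$ is binomial with parameters $s$ and $1/2$, so it has conditional mean $s/2$ and conditional second moment $\Var + (\text{mean})^2 = s/4 + s^2/4 = (s^2+s)/4$. Averaging over $S$ then reduces both computations to the first two moments of $S=\overline{X}+\overline{Y}$, which by independence are $\mathbb{E}[S]=2m_1$ and $\mathbb{E}[S^2]=2m_2+2m_1^2$.

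Assembling the pieces: for $\psi(x)=x$ one finds $\mathbb{E}[B\circ S]=\mathbb{E}[S]/2=m_1$, whence $\frac{\dd}{\dd t}m_1 = m_1 - m_1 = 0$; this gives conservation of the mean and in particular $m_1(t)\equiv\mu$. For $\psi(x)=x^2$ one finds $\mathbb{E}[(B\circ S)^2]=\tfrac14\big(\mathbb{E}[S^2]+\mathbb{E}[S]\big)=\tfrac14(2m_2+2\mu^2+2\mu)=\tfrac12(m_2+\mu^2+\mu)$, so that $\frac{\dd}{\dd t}m_2 = \tfrac12(m_2+\mu^2+\mu) - m_2 = \tfrac{\mu^2+\mu}{2}-\tfrac12 m_2$, which is exactly \eqref{eq:moments}. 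Solving this scalar linear ODE yields $m_2(t)-(\mu^2+\mu) = \big(m_2(0)-\mu^2-\mu\big)\expo^{-t/2}$, which is the claimed exponential relaxation to $\mu^2+\mu$.

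The step I expect to require genuine care is the justification of these formal manipulations rather than the arithmetic itself: I must verify that the series defining $m_1$ and $m_2$ may be differentiated term by term and that the resulting sum may be interchanged with the time derivative. This is precisely where the hypothesis of finite initial variance enters. One shows a priori that $m_2(t)$ stays finite for all $t$ — the derived ODE itself supplies the bound $m_2(t)\le \max\{m_2(0),\,\mu^2+\mu\}$ — which controls the tail $\sum_n n^2\,p_n(t)$ and legitimizes the exchanges. As an alternative that sidesteps the probabilistic language, one may run the same calculation directly on \eqref{eq:defQ}: multiply $Q[\mathbf{p}]_n$ by $n$ (resp.\ $n^2$), sum over $n$, swap the order of summation, and apply the elementary identities $\sum_{n}\tbinom{m}{n}\tfrac{n}{2^m}=\tfrac{m}{2}$ and $\sum_{n}\tbinom{m}{n}\tfrac{n^2}{2^m}=\tfrac{m^2+m}{4}$, the absolute convergence needed for the swap being again guaranteed by the finiteness of the second moment.
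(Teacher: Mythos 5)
Your proposal is correct and is essentially the paper's own argument: the paper multiplies $Q[\mathbf{p}]_n$ by $n$ (resp.\ $n^2$), swaps the order of summation, and evaluates the inner sum via the first and second moments of a fair binomial, which is exactly your conditional-moment computation for $B\circ S$ given $S=\overline{X}+\overline{Y}$ written in probabilistic rather than combinatorial notation. Your added remarks on justifying the term-by-term differentiation and the a priori bound $m_2(t)\le\max\{m_2(0),\mu^2+\mu\}$ are sound but do not change the route.
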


\begin{proof} Making use of the evolution equation \eqref{eq:ODE} we deduce that
\begin{align*}
\sum_{n = 0}^\infty n\,p'_n &= \sum_{k = 0}^\infty \sum_{\ell = 0}^\infty
\left(\sum_{n =0}^{
k+l}n\,\tbinom{k+\ell}{n}\,\tfrac{1}{2^{k+\ell}}\right)\,p_k\,p_\ell -
\sum_{n= 0}^\infty n\,p_n ,\\
&= \sum_{k = 0}^\infty \sum_{\ell =0}^\infty
\tfrac{k+\ell}{2}\,p_k\,p_\ell - \sum_{n =0}^\infty n\,p_n = 0,
\end{align*}
where the last identity follows from the conservation $\sum_{n = 0}^\infty
p_n(t) = 1$ for all $t\geq 0$. A similar computation yields the second identity
provided in \eqref{eq:moments}, whence
$$
\sum_{n =0}^\infty n^2\,p_n(t) = \mu^2+\mu + \big(\sum_{n =0}^\infty n^2\,p_n(0) -
\mu^2 - \mu\big)\expo^{-t/2}
$$
converges exponentially fast to $\mu^2 + \mu$. \end{proof}

We end this subsection with a numerical experiment indicating the relaxation of the solution of \eqref{eq:ODE} to its Poisson equilibrium distribution $\mathbf{p}_\lambda^*$, as is shown in Figure \ref{numerics_binomial_reshuffling}.
\begin{figure}[ht]
\centering
\includegraphics[width=.8\textwidth]{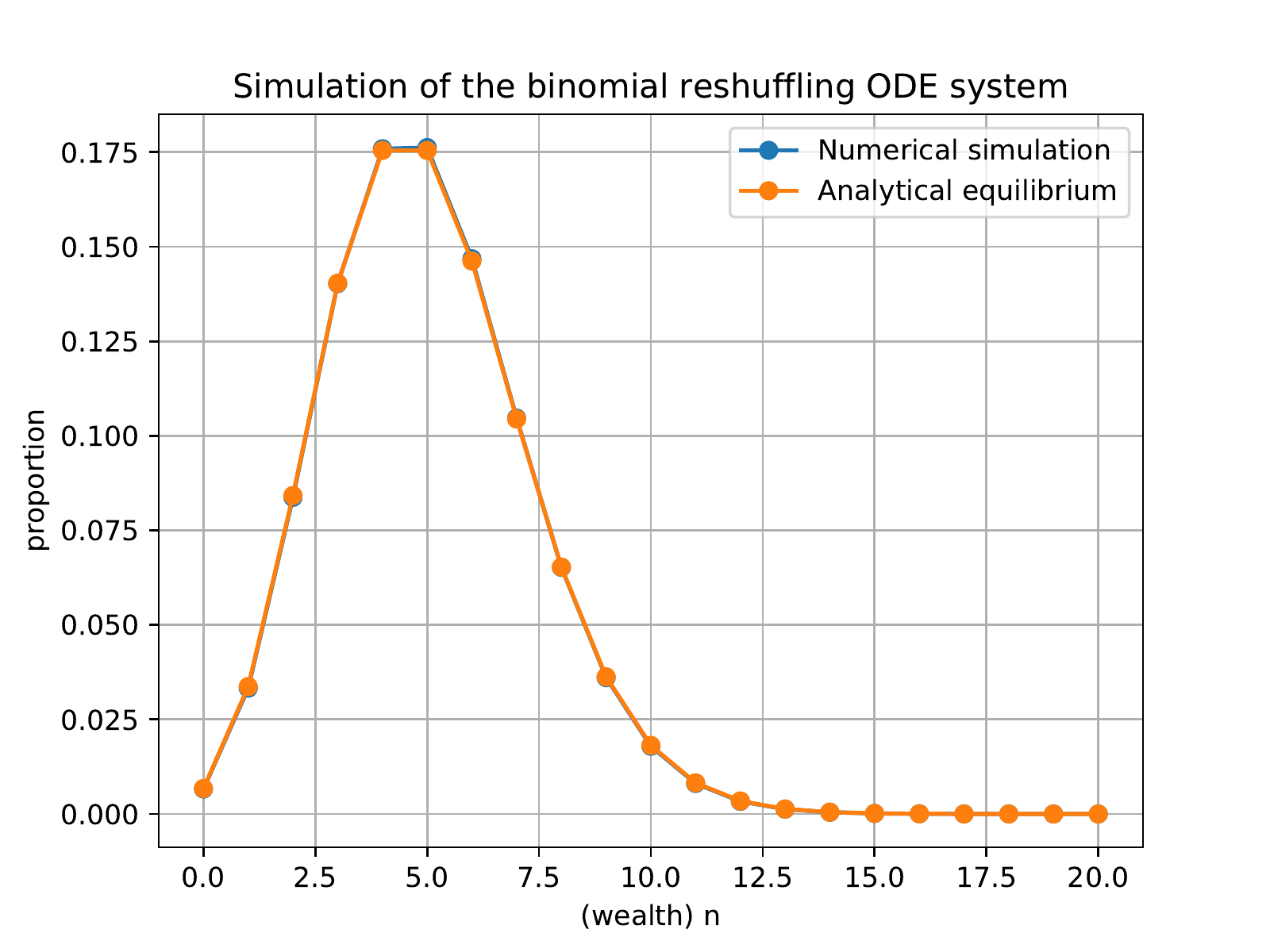}
\caption{Simulation of the Boltzmann-type mean-field ODE system \eqref{eq:ODE} starting with the Dirac initial datum ${\bf p}(0)$, i.e., $p_\lambda(0) = 1$ and $p_n(0) \neq 0$ for all $n \neq \lambda$ with $\lambda = 5$. The blue and the orange curve represent the numerical solution (at time $t =1.5$) and the equilibrium $\mathbf{p}_\lambda^*$, respectively. We emphasize that in this example ${\bf p}(t=1.5)$ and $\mathbf{p}_\lambda^*$ are almost indistinguishable.}
\label{numerics_binomial_reshuffling}
\end{figure}

\subsection{Convergence towards Poisson equilibrium}
\label{subsec:3.2}

In this section, we will modify a coupling method provided in
\cite{cao_entropy_2021} to justify the convergence of the solution of
\eqref{eq:ODE_repeat} to the Poisson equilibrium distribution in the
$2$-Wasserstein metric. Recall that the $W_2(\mathbf{p},\mathbf{q})$ denotes
the $2$-Wasserstein distance between two probability distributions $\mathbf{p}$
and $\mathbf{q}$ on $\mathbb{N}$, see the definition \eqref{w2defn}.
We begin by providing a stochastic representation of the evolution equation \eqref{eq:ODE_repeat}, on which a coupling argument relies.
\begin{proposition}\label{stochastic_representation}
  Assume that ${\mathbf{p}}(t)$ is a solution of \eqref{eq:ODE_repeat} with initial condition ${\mathbf{p}}(0)$ being a probability mass function whose support is contained in $\mathbb N$ having mean value $\mu$. Defining $(X_t)_{t\geq 0}$ to be a $\mathbb N$-valued continuous-time pure jump process with jumps of the form
  \begin{equation}\label{coupling_nonlinear}
    \begin{array}{ccc}
      X_t & \leadsto & B\circ (X_t+Y_t),
    \end{array}
  \end{equation}
  where $Y_t$ is an i.i.d. copy of $X_t$ and the jump occurs according to a Poisson clock running at the unit rate. If $\mathrm{Law}(X_0) = {\mathbf{p}}(0)$, then $\mathrm{Law}(X_t) = {\mathbf{p}}(t)$ for all $t\geq 0$.
\end{proposition}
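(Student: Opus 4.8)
The plan is to identify the law $\mathbf{q}(t) := \mathrm{Law}(X_t)$ of the jump process with the prescribed solution $\mathbf{p}(t)$ by deriving the forward Kolmogorov equation for $\mathbf{q}(t)$ and then invoking uniqueness. Since the jump mechanism $X_t \leadsto B\circ(X_t+Y_t)$ depends on the law of the process itself through $Y_t$, I would first render the construction non-circular by \emph{freezing} the flow of measures: let $Y_t$ be sampled from the given solution $\mathbf{p}(t)$. With this prescription, $(X_t)_{t\ge 0}$ becomes a genuine time-inhomogeneous (and now \emph{linear}) Markov jump process with constant jump rate $1$, whose generator acts on a bounded test function $\psi:\mathbb{N}\to\mathbb{R}$ by
$$
(\mathcal{L}_t\psi)(x) = \mathbb{E}_{Y\sim\mathbf{p}(t)}\big[\psi(B\circ(x+Y)) - \psi(x)\big].
$$
Because the rate is constant the process does not explode, and since $B\circ(x+Y)\le x+Y$ almost surely the jump targets are finite; the propagated finite second moment from Lemma~\ref{prop1} ensures the relevant sums converge.

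Next I would specialize to the indicator test functions $\psi = \mathbbm{1}_{\{\cdot=n\}}$. Feeding these into the associated Kolmogorov equation, which here is exactly \eqref{eq:KBE}, yields
$$
\frac{\dd}{\dd t} q_n(t) = \mathbb{E}_{X\sim\mathbf{q}(t),\,Y\sim\mathbf{p}(t)}\big[\mathbbm{1}_{\{B\circ(X+Y)=n\}}\big] - q_n(t).
$$
The gain term is precisely $\mathbb{P}(B\circ(X+Y)=n)$ with $X\sim\mathbf{q}(t)$ and $Y\sim\mathbf{p}(t)$ independent, which by the computation underlying Lemma~\ref{lem1} (carried out now with two distinct marginals) equals $\sum_{k,\ell}\tbinom{k+\ell}{n}2^{-(k+\ell)}q_k(t)\,p_\ell(t)\,\mathbbm{1}_{\{n\le k+\ell\}}$. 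Thus $\mathbf{q}(t)$ solves the linear system obtained from \eqref{eq:Q_repeat} by freezing one factor at $\mathbf{p}(t)$.

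To close the argument, observe that the given $\mathbf{p}(t)$ also satisfies this linear system: replacing $\mathbf{q}$ by $\mathbf{p}$ recovers exactly the nonlinear right-hand side $Q[\mathbf{p}(t)]_n$. Since $\mathbf{q}(0)=\mathbf{p}(0)$ by hypothesis, the proof reduces to uniqueness for this linear $\ell^1$-valued ODE. I would settle it by a Gronwall estimate: setting $\mathbf{r}(t)=\mathbf{q}(t)-\mathbf{p}(t)$, summing the absolute values of the componentwise equations over $n$, and using the normalization $\sum_{n=0}^{k+\ell}\tbinom{k+\ell}{n}2^{-(k+\ell)}=1$, one obtains $\tfrac{\dd}{\dd t}\|\mathbf{r}(t)\|_{\ell^1}\le 2\|\mathbf{r}(t)\|_{\ell^1}$, forcing $\mathbf{r}\equiv 0$. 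This simultaneously shows $\mathrm{Law}(Y_t)=\mathbf{p}(t)=\mathbf{q}(t)=\mathrm{Law}(X_t)$, so that $Y_t$ is a \emph{bona fide} i.i.d. copy of $X_t$ and the frozen-flow process is exactly the nonlinear process described in the statement. The main obstacle is not the generator computation, which is essentially \eqref{eq:KBE} tested against indicators, but the analytic bookkeeping on the infinite state space $\mathbb{N}$: interchanging $\tfrac{\dd}{\dd t}$ with the infinite summation and with the expectation, and verifying absolute convergence of the double series. Here the uniform bound $\sum_n\tbinom{k+\ell}{n}2^{-(k+\ell)}=1$ together with the finite second moment from Lemma~\ref{prop1} supplies the domination needed to legitimize each interchange via dominated convergence.
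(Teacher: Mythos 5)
Your proposal is correct, and its computational core coincides with the paper's: the paper likewise tests the generator of the jump process against an arbitrary function $\varphi$ on $\mathbb{N}$, writes the Kolmogorov forward equation as in \eqref{eq:KBE}, and identifies the gain term with the double sum defining $Q$ via the computation underlying Lemma \ref{lem1}. Where you genuinely diverge is in what you do after that identification. The paper stops there: it concludes that the law of $X_t$ ``satisfies the ODE system \eqref{eq:ODE_repeat}'' and declares the proof complete, which tacitly assumes both that the self-referential jump mechanism \eqref{coupling_nonlinear} defines a bona fide process and that solutions of \eqref{eq:ODE_repeat} with a given initial datum are unique. You address both points explicitly: freezing $Y_t \sim \mathbf{p}(t)$ turns the nonlinear (McKean--Vlasov-type) mechanism into a genuine linear time-inhomogeneous Markov jump process, and your $\ell^1$ Gr\"onwall estimate, using the normalization $\sum_{n=0}^{k+\ell}\binom{k+\ell}{n}2^{-(k+\ell)}=1$ to get $\frac{\dd}{\dd t}\|\mathbf{q}(t)-\mathbf{p}(t)\|_{\ell^1}\le 2\|\mathbf{q}(t)-\mathbf{p}(t)\|_{\ell^1}$, supplies the uniqueness that forces $\mathrm{Law}(X_t)=\mathbf{p}(t)$, after which $Y_t$ really is an i.i.d.\ copy of $X_t$ and the frozen process coincides with the one in the statement. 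Your version buys rigor (and a clean resolution of the circularity) at the cost of length; the paper's buys brevity by leaving the uniqueness step implicit. The only residual point worth flagging is that if one reads the proposition as a claim about \emph{every} process satisfying \eqref{coupling_nonlinear} rather than about the one you construct, a nonlinear uniqueness statement is also needed; the same $\ell^1$ bookkeeping handles it, since the bilinear structure of $Q$ gives $\|Q[\mathbf{p}]-Q[\mathbf{q}]\|_{\ell^1}\le 3\|\mathbf{p}-\mathbf{q}\|_{\ell^1}$ on probability mass functions.
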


\begin{proof}
Taking $\varphi$ to be an arbitrary but fixed test function, we have
\begin{equation}\label{testfunc}
  \frac{\dd }{\dd t} \mathbb E[\varphi(X_t)] = \mathbb E[\varphi(B\circ(X_t+Y_t))] - \mathbb E[\varphi(X_t)].
\end{equation}
Let ${\mathbf{p}}(t)$ to be the probability mass function of $X_t$, we can rewrite \eqref{testfunc} as
\begin{align*}
\frac{\dd }{\dd t} \sum_{n=0}^\infty \varphi(n)\,p_n(t) &= \sum_{k=0}^\infty\sum_{\ell=0}^\infty\sum_{n=0}^{k+\ell} \tbinom{k+\ell}{n}\,\frac{1}{2^{k+\ell}}\,\varphi(n)\,p_k(t)\,p_\ell(t) -  \sum_{n=0}^\infty \varphi(n)\,p_n(t),\\
&= \sum_{n=0}^\infty \left(\sum_{k = 0}^\infty \sum_{\ell =0}^\infty \tbinom{k+\ell}{n}\,\tfrac{1}{2^{k+\ell}}\,p_k\,p_\ell\,\mathbbm{1}_{\{n \leq k+\ell\}} - p_n\right)\varphi(n).
\end{align*}
Thus, ${\mathbf{p}}(t)$ satisfies the ODE system \eqref{eq:ODE_repeat} and the proof is completed.
\end{proof}

\begin{remark}\label{rem}
  Using a similar reasoning, we can show that if $(\overline{X}_t)_{t\geq 0}$ is a $\mathbb N$-valued continuous-time pure jump process with jumps of the form
  \begin{equation}\label{coupling_limit}
    \begin{array}{ccc}
      \overline{X}_t & \leadsto & B\circ(\overline{X}_t+\overline{Y}_t),
    \end{array}
  \end{equation}
  where $\overline{Y}_t$ is an i.i.d. copy of $\overline{X}_t$ and the jump occurs according to a Poisson clock running at the unit rate. Then $\mathrm{Law}(\overline{X}_0) = {\mathbf{p}}^*_\lambda$ implies $\mathrm{Law}(\overline{X}_t) = {\mathbf{p}}^*_\lambda$ for all $t\geq 0$, where ${\mathbf{p}}^*_\lambda$ is the Poisson distribution.
\end{remark}

\subsection{Proof of Theorem \ref{thm1}} \label{proofthm1}

We are now prepared to prove our main result.

\begin{proof}[Proof of Theorem \ref{thm1}]
The proof strategy is based on coupling the two probability mass functions
${\mathbf{p}}(t)$ and ${\mathbf{p}}^*_\lambda$ for all $t\geq 0$. Assume that
$(X_t)_{t\geq 0}$ and $(\overline{X}_t)_{t\geq 0}$ are $\mathbb N$-valued
continuous-time pure jump processes with jumps of the form
\eqref{coupling_nonlinear} and \eqref{coupling_limit}, respectively. We can
take $(X_t,Y_t)$ and $(\overline{X}_t,\overline{Y}_t)$ as in the statement of
Proposition \ref{stochastic_representation} and Remark \ref{rem}, respectively.
Meanwhile, we require that $X_t \perp \overline{Y}_t$, $\overline{X}_t \perp
Y_t$ and $(X_t,\overline{X}_t) \perp (Y_t,\overline{Y}_t)$, i.e., several
independence assumptions can be imposed along the way when we introduce the
coupling. We emphasize that we can employ the same set of independent fair
coins in the definition of $B\circ (X_t+Y_t)$ and
$B\circ(\overline{X}_t+\overline{Y}_t)$, leading us to the representations
\begin{equation}\label{eq:coupling_coins}
B\circ (X_t+Y_t) = \sum_{k=1}^{X_t+Y_t} B_k \quad \text{and}\quad B\circ(\overline{X}_t+\overline{Y}_t) = \sum_{k=1}^{\overline{X}_t+\overline{Y}_t} B_k,
\end{equation}
in which $\{B_k\}$ is a collection of independent Bernoulli random variables. Due to the coupling we have just constructed, along with the notation $R_t := |X_t+Y_t - \overline{X}_t - \overline{Y}_t|$, we deduce that
\begin{align*}
\frac{\dd}{\dd t}\mathbb E[(X_t-\overline{X}_t)^2] &= \mathbb E\left[\left(B\circ (X_t+Y_t)-B\circ(\overline{X}_t+\overline{Y}_t)\right)^2\right] -  \mathbb E[(X_t-\overline{X}_t)^2]\\
&= \mathbb{E}\left[\mathbb{E}\left[\left|\sum_{k=1}^{R_t} B_k\right|^2 \mid R_t\right]\right]-\mathbb E[(X_t-\overline{X}_t)^2]\\
&= \mathbb{E}\left[\mathbb{E}\left[\sum_{i,j=1}^{R_t}B_i\,B_j \mid R_t\right]\right]-\mathbb E[(X_t-\overline{X}_t)^2]\\
&= \mathbb{E}\left[\frac 12 \,R_t + \frac 14 \, R_t\,(R_t - 1) \right] -\mathbb E[(X_t-\overline{X}_t)^2] \\
&= \frac{1}{4}\,\mathbb{E}[R^2_t] + \frac{1}{4}\,\mathbb{E}[R_t] - \mathbb E[(X_t-\overline{X}_t)^2] \\
&= \frac{1}{4}\,\mathbb{E}[|X_t+Y_t - \overline{X}_t - \overline{Y}_t|] - \frac{1}{2}\,\mathbb E[(X_t-\overline{X}_t)^2],
\end{align*}
where the last identity follows from the elementary observation that \[\mathbb{E}[R^2_t] = \mathbb{E}\left[(X_t-\overline{X}_t)^2 + (Y_t-\overline{Y}_t)^2\right] = 2\,\mathbb E[(X_t-\overline{X}_t)^2].\] As an immediate by-product of the preceding computations, we obtain
\begin{align*}
\frac{\dd}{\dd t}\mathbb E[(X_t-\overline{X}_t)^2] &= \frac{1}{4}\,\mathbb{E}[|X_t-\overline{X}_t+Y_t-\overline{Y}_t|] - \frac{1}{2}\,\mathbb E[(X_t-\overline{X}_t)^2] \\
&\leq \frac{1}{2}\,\mathbb{E}\left[(X_t-\overline{X}_t)^2 + (Y_t-\overline{Y}_t)^2\right] - \frac{1}{2}\,\mathbb E[(X_t-\overline{X}_t)^2] = 0.
\end{align*}
Next, we notice that before we reach the time $T$ for which $\mathbb E[(X_T-\overline{X}_T)^2] \leq 1$, we can further deduce that
\begin{align*}
\mathbb{E}[|X_t+Y_t - \overline{X}_t - \overline{Y}_t|] &\leq \sqrt{\mathbb{E}\left[(X_t-\overline{X}_t+Y_t-\overline{Y}_t)^2\right]} \\
&\leq \sqrt{2}\,\sqrt{\mathbb E[(X_t-\overline{X}_t)^2]} \leq \sqrt{2}\,\mathbb E[(X_t-\overline{X}_t)^2],
\end{align*}
where the last inequality follows from $\mathbb E[(X_t-\overline{X}_t)^2] \geq 1$ for all $t \in [0,T]$. Consequently, we arrive at
\begin{equation}\label{eq:exponential_decay}
\frac{\dd}{\dd t}\mathbb E[(X_t-\overline{X}_t)^2] \leq -\left(\frac 12 - \frac{\sqrt{2}}{4}\right)\,\mathbb E[(X_t-\overline{X}_t)^2]~~~\textrm{for all $0 \leq t \leq T$}.
\end{equation}
Unfortunately, the aforementioned argument leading to the exponential decay of $\mathbb E[(X_t-\overline{X}_t)^2]$ before a finite time $T$ breaks down when the quantity of interest $\mathbb E[(X_t-\overline{X}_t)^2]$ becomes no larger than $1$ (which is guaranteed when $t$ is sufficiently large). Thus, we have to resort to a different approach in order to have a good enough upper bound for $\mathbb{E}[|X_t-\overline{X}_t+Y_t-\overline{Y}_t|]$. To this end, we will show that
\begin{equation}\label{eq:goal}
\mathbb{E}[|R_t|] \leq 2\,\mathbb E[(X_t-\overline{X}_t)^2] - \left(1-\sqrt{\frac{2}{3}}\right)\min\left\{\mathbb E[(X_t-\overline{X}_t)^2],\left(\mathbb E[(X_t-\overline{X}_t)^2]\right)^2\right\}
\end{equation}
for all $t \in \mathbb{R}_+$, from which we end up with the following differential inequality
\begin{equation}\label{eq:diff_inequ}
\frac{\dd}{\dd t}\mathbb E[(X_t-\overline{X}_t)^2] \leq -\frac{1-\sqrt{\frac{2}{3}}}{4}\,\min\left\{\mathbb E[(X_t-\overline{X}_t)^2],\left(\mathbb E[(X_t-\overline{X}_t)^2]\right)^2\right\}
\end{equation}
holding for all $t\geq 0$. In particular, for $t \geq T = \min\{t \geq 0 \mid \mathbb E[(X_t-\overline{X}_t)^2] \leq 1\}$ the inequality \eqref{eq:diff_inequ} reads as \[\frac{\dd}{\dd t}\mathbb E[(X_t-\overline{X}_t)^2] \leq -\frac{1-\sqrt{\frac{2}{3}}}{4}\,\left(\mathbb E[(X_t-\overline{X}_t)^2]\right)^2, \] which leads us to
\begin{equation}\label{eq:large_time_estimate}
\mathbb E[(X_t-\overline{X}_t)^2] \leq \frac{1}{\frac{1 - \sqrt{2\slash 3}}{4}\,t + 1} ~~~\textrm{for all $t \geq T$}.
\end{equation}
If we combine \eqref{eq:exponential_decay} and \eqref{eq:large_time_estimate}, and pick $\overline{X_0}$ with law ${\mathbf{p}}(0)$ so that $W^2_2({\mathbf{p}}(0), {\mathbf{p}}^*_\lambda) = \mathbb E[(X_0-\overline{X}_0)^2]$, we obtain \eqref{Wasserstein_conv} and the proof will be finished. Now it remains to justify the validity of the refined estimate \eqref{eq:goal} for $\mathbb{E}[|X_t - \overline{X}_t + Y_t - \overline{Y}_t|]$, and we consider the following two cases:
\begin{itemize}
\item \emph{Case i)}~~ Suppose that $\mathbb{P}(|X_t - \overline{X}_t|=1) \leq c\,\mathbb E[(X_t-\overline{X}_t)^2]$ for some constant $c \in (0,1)$ to be specified later. Then we deduce that
\begin{align*}
\mathbb{E}[|X_t - \overline{X}_t + Y_t - \overline{Y}_t|] &= 2\,\mathbb{E}[|X_t - \overline{X}_t|] \\
&= 2\,\mathbb{P}(|X_t - \overline{X}_t|=1) + 2\,\sum_{k=2}^\infty k\,\mathbb{P}(|X_t - \overline{X}_t|=k) \\
&\leq \mathbb{P}(|X_t - \overline{X}_t|=1) + \sum_{k=1}^\infty k^2\,\mathbb{P}(|X_t - \overline{X}_t|=k) \\
&= (1+c)\,\mathbb E[(X_t-\overline{X}_t)^2] \\
&= 2\,\mathbb E[(X_t-\overline{X}_t)^2] - (1-c)\,\mathbb E[(X_t-\overline{X}_t)^2].
\end{align*}
\item \emph{Case ii)}~~ Suppose that $\mathbb{P}(|X_t - \overline{X}_t|=1) \geq c\,\mathbb E[(X_t-\overline{X}_t)^2]$, where the constant $c$ is the same one as appeared in \emph{Case i)}. We now proceed as follows:
\begin{align*}
\mathbb{E}[|X_t - \overline{X}_t + Y_t - \overline{Y}_t|] &\leq \mathbb{E}[|X_t-\overline{X}_t| + |Y_t-\overline{Y}_t| \\
&\qquad ~~ - 2\,\mathbbm{1}_{\{X_t-\overline{X}_t = 1\}}\,\mathbbm{1}_{\{Y_t-\overline{Y}_t = -1\}}] \\
&\leq 2\mathbb E[(X_t-\overline{X}_t)^2] - 2\mathbb{P}(X_t - \overline{X}_t=1)\mathbb{P}(Y_t - \overline{Y}_t=-1) \\
&= 2\mathbb E[(X_t-\overline{X}_t)^2] - 2\mathbb{P}(X_t - \overline{X}_t=1)\mathbb{P}(X_t - \overline{X}_t=-1).
\end{align*}
Since we have assumed that $\mathbb{P}(|X_t - \overline{X}_t|=1) \geq c\,\mathbb E[(X_t-\overline{X}_t)^2]$, without any loss of generality we may further assume that
\begin{equation}\label{eq:assumption1}
\mathbb{P}(X_t - \overline{X}_t=1) \geq \frac{c}{2}\,\mathbb E[(X_t-\overline{X}_t)^2].
\end{equation}
As $\mathbb E[X_t - \overline{X}_t] = \mu - \mu = 0$, we also have $\mathbb E\left[|X_t - \overline{X}_t|\,\mathbbm{1}_{\{X_t - \overline{X}_t <0\}}\right] = \mathbb E\left[|X_t - \overline{X}_t|\,\mathbbm{1}_{\{X_t - \overline{X}_t >0\}}\right]$, from which it follows that
\begin{equation}\label{eq:lower_bound_preliminary}
\mathbb{P}(X_t - \overline{X}_t=-1) + \mathbb E\left[|X_t - \overline{X}_t|\,\mathbbm{1}_{\{X_t - \overline{X}_t <-1\}}\right] \geq \mathbb{P}(X_t - \overline{X}_t=1).
\end{equation}
Due to the identity \[\mathbb E[|X_t - \overline{X}_t|^2] = \mathbb{P}(|X_t - \overline{X}_t|=1) + \mathbb E[|X_t - \overline{X}_t|^2\,\mathbbm{1}_{\{|X_t - \overline{X}_t| > 1\}}],\] we have the bound
\begin{equation}\label{eq:lower_bound_2}
\begin{aligned}
\mathbb E\left[|X_t - \overline{X}_t|\,\mathbbm{1}_{\{X_t - \overline{X}_t <-1\}}\right] &\leq \mathbb E\left[|X_t - \overline{X}_t|^2\,\mathbbm{1}_{\{|X_t - \overline{X}_t| > 1\}}\right] \\
&\leq (1-c)\,\mathbb E[|X_t - \overline{X}_t|^2].
\end{aligned}
\end{equation}
Combining \eqref{eq:assumption1}, \eqref{eq:lower_bound_preliminary} and \eqref{eq:lower_bound_2} yields
\begin{equation}\label{eq:lower_bound_3}
\mathbb{P}(X_t - \overline{X}_t=-1) \geq \left(\frac{c}{2}-(1-c)\right)\,\mathbb E[|X_t - \overline{X}_t|^2] = \left(\frac{3c}{2}-1\right)\,\mathbb E[|X_t - \overline{X}_t|^2].
\end{equation}
Combining the two lower bounds \eqref{eq:assumption1} and \eqref{eq:lower_bound_3} leads us to
\begin{equation}\label{eq:final_bound}
\mathbb{P}(X_t - \overline{X}_t=1)\,\mathbb{P}(X_t - \overline{X}_t=-1) \geq \frac{c}{2}\left(\frac{3c}{2}-1\right)\left(\mathbb E[|X_t - \overline{X}_t|^2]\right)^2,
\end{equation}
whence we finally deduce that \[\mathbb{E}[|X_t - \overline{X}_t + Y_t - \overline{Y}_t|] \leq 2\,\mathbb E[(X_t-\overline{X}_t)^2] - c\left(\frac{3c}{2}-1\right)\left(\mathbb E[|X_t - \overline{X}_t|^2]\right)^2.\]
\end{itemize}
Setting $c = \sqrt{2\slash 3}$ and combining the discussions above yield the advertised estimate \eqref{eq:goal}, thereby completing the entire proof Theorem \ref{thm1}.
\end{proof}

\begin{remark}
One might have noticed that the coupling argument presented here is more sophisticated than the corresponding coupling argument used for the uniform reshuffling model \cite{cao_entropy_2021}. One simple explanation is that the random variable $U\circ (X_i + X_j)$ appearing in the update of the uniform reshuffling dynamics \eqref{uniform_reshuffling} admits a nice ``factorization property'', meaning that we have
\begin{equation}\label{eq:nice_separation}
\mathrm{Uniform}([0,X_i+X_j]) \overset{\dd}{=} \mathrm{Uniform}([0,1])\cdot (X_i+X_j),
\end{equation}
where the notation $X \overset{\dd}{=} Y$ is used whenever the random variables $X$ and $Y$ share the same distribution. However, it is not possible (in our opinion) to ``decompose'' the random variable $B\circ (X_i + X_j)$ as a product of two independent random variables similar to \eqref{eq:nice_separation}. Loosely speaking, the noise (or randomness) introduced in the binomial reshuffling dynamics is somehow ``intrinsic'' while the noise rendered by the uniform reshuffling mechanism is ``extrinsic''.
\end{remark}

\section{Alternative approach to convergence}\label{sec:another_route}

In this section, we consider the discrete time version of the proposed binomial reshuffling model and we sketch the argument (in the same spirit as those used in \cite{lanchier_rigorous_2017,lanchier_rigorous_2018}), which shows the convergence of the distribution of money to a Poisson distribution. The general strategy is to investigate the limiting behavior for each fixed number $N$ of agents as time becomes large (by focusing on the motion of dollars), and then compute the probability that a typical individual (immersed in an infinite population) has $n$ dollars at equilibrium in the limits as $N \to \infty$.

Let ${\bf X}(t) = \left(X_1(t),\ldots,X_N(t)\right)$ with $t \in \mathbb N$ and denote by
\[\mathcal{A}_{N,\mu} := \big\{{\bf X} \in \mathbb{N}^N \mid \sum_{n=1}^N X_i = N\mu\big\}\]
the configuration (or state) space. We will also denote $[N] = \{1,2\ldots,N\}$ for notation simplicity. Given ${\bf Y},{\bf Z} \in \mathcal{A}_{N,\mu}$, it is clear that \[\mathbb{P}\left(X(t+1) = {\bf Z} \mid X(t) = {\bf Y} \right) \neq 0\] if and only if
$Y_k = Z_k$ for all $k \in [N] \setminus \{i,j\}$ and $Y_i + Y_j = Z_i + Z_j$ for some $(i,j) \in [N]^2 \setminus \{i=j\}$. By a similar argument as given in \cite{lanchier_rigorous_2017,lanchier_rigorous_2018}, one can show that the discrete time binomial reshuffling dynamics is a finite irreducible and aperiodic Markov chain, whence the process will converge to a unique stationary distribution (as $t \to \infty$) regardless of the choice of initial configuration. We now show that the process is time-reversible with the following multinomial stationary distribution
\begin{equation}\label{eq:multinomial}
\mu_\infty\left({\bf X}\right) := \binom{N\mu}{X_1,X_2,\ldots,X_N}\prod_{i =1}^N \frac{1}{N^{X_i}},
\end{equation}
i.e., each dollar is independently in agent $i$'s pocket with probability $\frac{1}{N}$. Indeed, given ${\bf Y},{\bf Z} \in \mathcal{A}_{N,\mu}$ with $\mathbb{P}\left(X(t+1) = {\bf Z} \mid X(t) = {\bf Y} \right) \neq 0$ as described above, we have that
\begin{equation*}
\begin{aligned}
\mathbb{P}\left(X(t+1) = {\bf Z} \mid X(t) = {\bf Y} \right) &= \frac{2}{N(N-1)}\,\mathbb{P}\left(\textrm{Binomial}\left(Y_i+Y_j,\frac 12\right) = Z_i\right) \\
&= \frac{2}{N(N-1)}\binom{Y_i+Y_j}{Z_i}\left(\frac 12\right)^{Y_i + Y_j} \\
&= \frac{2}{N(N-1)}\binom{Y_i+Y_j}{Z_i}\left(\frac 12\right)^{Z_i + Z_j}.
\end{aligned}
\end{equation*}
Therefore, \[\frac{\mathbb{P}\left(X(t+1) = {\bf Z} \mid X(t) = {\bf Y} \right)}{\mathbb{P}\left(X(t+1) = {\bf Y} \mid X(t) = {\bf Z} \right)} = \frac{\binom{Y_i + Y_j}{Z_i}}{\binom{Z_i + Z_j}{Y_i}} = \frac{(Y_i)!(Y_j)!}{(Z_i)!(Z_j)!} = \frac{\mu_\infty({\bf Z})}{\mu_\infty({\bf Y})} \] or
\begin{equation}\label{eq:detailed_balance}
\mathbb{P}\left(X(t+1) = {\bf Z} \mid X(t) = {\bf Y} \right)\mu_\infty({\bf Y}) = \mathbb{P}\left(X(t+1) = {\bf Y} \mid X(t) = {\bf Z} \right)\mu_\infty({\bf Z}).
\end{equation}
On the other hand, the detailed balance equation \eqref{eq:detailed_balance} holds trivially when ${\bf Y} \in \mathcal{A}_{N,\mu}$ and ${\bf Z} \in \mathcal{A}_{N,\mu}$ are such that $\mathbb{P}\left(X(t+1) = {\bf Z} \mid X(t) = {\bf Y} \right) = 0$. In summary, the discrete time binomial reshuffling process is (time) reversible with respect to the multinomial distribution \eqref{eq:multinomial} and the distribution \eqref{eq:multinomial} is indeed the stationary distribution of the binomial reshuffling model. Now we can prove the following convergence result.

\begin{theorem}\label{thm2}
For the discrete time binomial reshuffling model, for each fixed $n$ we have that
\begin{equation*}
\lim_{t\to \infty} \mathbb{P}\left(X_1(t) = n\right) = \binom{N\mu}{n}\left(\frac{1}{N}\right)^n\left(1 - \frac{1}{N}\right)^{N\mu - n}.
\end{equation*}
Consequently, \[\lim_{N\to \infty}\lim_{t\to \infty} \mathbb{P}\left(X_1(t) = n\right) = \frac{\mu^n\,\expo^{-\mu}}{n!}.\]
\end{theorem}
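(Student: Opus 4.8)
The plan is to separate the two limits cleanly: first the time limit $t \to \infty$ for fixed $N$, which is settled by ergodicity, and then the population limit $N \to \infty$, which is a classical Poisson approximation. Since the preceding discussion establishes that the discrete-time chain on $\mathcal{A}_{N,\mu}$ is finite, irreducible, and aperiodic with unique stationary distribution $\mu_\infty$ given by \eqref{eq:multinomial}, the convergence theorem for finite Markov chains yields $\lim_{t\to\infty}\mathbb{P}(X_1(t) = n) = \mu_\infty(\{{\bf X} : X_1 = n\})$, independently of the initial configuration. Thus the first task reduces to computing the marginal law of $X_1$ under the multinomial stationary distribution.

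First I would compute this marginal. Summing the multinomial mass \eqref{eq:multinomial} over all configurations $(X_2,\ldots,X_N)$ with $\sum_{i=2}^N X_i = N\mu - n$, I would pull out the factor $\binom{N\mu}{n}N^{-n}$ and recognize the remaining sum as a full multinomial expansion with $N-1$ terms each equal to $1/N$; by the multinomial theorem this collapses to $(1 - 1/N)^{N\mu - n}$, giving
\[
\mu_\infty(\{X_1 = n\}) = \binom{N\mu}{n}\left(\tfrac{1}{N}\right)^n\left(1 - \tfrac{1}{N}\right)^{N\mu - n}.
\]
Equivalently, and more conceptually, the stationary description recorded after \eqref{eq:multinomial}---each of the $N\mu$ dollars sits in agent $1$'s pocket independently with probability $1/N$---identifies $X_1$ as a sum of $N\mu$ independent Bernoulli$(1/N)$ trials, i.e.\ a Binomial$(N\mu, 1/N)$ variable, which is exactly the first displayed identity of the theorem.

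It then remains to pass to the limit $N \to \infty$ for fixed $n$. Here I would invoke the standard law of rare events: the number of trials $N\mu$ grows while the success probability $1/N$ shrinks so that the mean stays fixed at $\mu$. Concretely, I would factor $\binom{N\mu}{n}N^{-n} = \tfrac{1}{n!}\prod_{k=0}^{n-1}\tfrac{N\mu - k}{N} \to \tfrac{\mu^n}{n!}$, since each of the $n$ factors tends to $\mu$, and separately $(1 - 1/N)^{N\mu - n} \to \expo^{-\mu}$, since $(1-1/N)^{N} \to \expo^{-1}$ while $(1-1/N)^{-n} \to 1$ for fixed $n$. Multiplying the two limits gives the Poisson mass $\mu^n\,\expo^{-\mu}/n!$.

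No step presents a genuine obstacle; the argument is essentially two textbook facts---convergence of finite ergodic chains and the Poisson limit of the binomial---glued together through the marginalization of the multinomial. The only points requiring a little care are ensuring that $N\mu$ is an integer so that the stationary distribution \eqref{eq:multinomial} and the binomial in the theorem make sense (one takes $N$ along a subsequence with $N\mu \in \mathbb{N}$, or equivalently replaces $N\mu$ by $\lfloor N\mu \rfloor$, which does not affect the limit), and respecting the order of limits---first $t\to\infty$, then $N\to\infty$---since these two limits need not commute in general.
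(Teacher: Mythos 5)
Your proposal is correct and follows essentially the same route as the paper: appeal to ergodicity of the finite irreducible aperiodic chain to reduce to the stationary marginal, collapse the multinomial sum via the multinomial theorem to obtain the Binomial$(N\mu, 1/N)$ law for $X_1$, and then apply the classical Poisson approximation as $N \to \infty$. Your added remarks on the integrality of $N\mu$ and the non-commutativity of the two limits are sensible points of care that the paper leaves implicit.
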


\begin{proof} The proof is similar to the proofs of Theorem 1 and Theorem 2 in \cite{lanchier_rigorous_2017} for other econophysics models. For all ${\bf X} \in \mathcal{A}_{N,\mu}$ such that $X_1 = n$, we have that
\begin{align*}
\mu_\infty({\bf X}) &= \binom{N\mu}{n,X_2,\ldots,X_N} \left(\prod_{i = 2}^N \frac{1}{N^{X_i}}\right)\frac{1}{N^n} \\
&= \binom{N\mu}{n}\binom{N\mu - n}{X_2,\ldots,X_N} \left(\prod_{i = 2}^N \frac{1}{N^{X_i}}\right)\frac{1}{N^n}.
\end{align*}
Therefore, the stationarity of the multinomial distribution $\mu_\infty$ and the multinomial theorem allow us to deduce that
\begin{align*}
\lim_{t\to \infty} \mathbb{P}\left(X_1(t) = n\right) &= \mu_\infty\left(\{{\bf X} \in \mathcal{A}_{N,\mu} \mid X_1 = n\}\right) \\
&= \sum_{{\bf X} \in \mathcal{A}_{N,\mu}} \binom{N\mu}{n}\binom{N\mu - n}{X_2,\ldots,X_N} \left(\prod_{i = 2}^N \frac{1}{N^{X_i}}\right)\frac{1}{N^n}\mathbbm{1}_{\{X_1 = n\}} \\
&= \binom{N\mu}{n}\left(\frac{1}{N}\right)^n\sum_{X_2+\cdots+X_N = N\mu-n} \binom{N\mu - n}{X_2,\ldots,X_N} \left(\prod_{i = 2}^N \frac{1}{N^{X_i}}\right) \\
&= \binom{N\mu}{n}\left(\frac{1}{N}\right)^n\left(\sum_{i=2}^N \frac{1}{N}\right)^{N\mu - n} = \binom{N\mu}{n}\left(\frac{1}{N}\right)^n\left(1-\frac{1}{N}\right)^{N\mu - n}.
\end{align*}
As a consequence, taking the large population limit as $N \to \infty$ and
recalling the classical result on Poisson approximation to binomial
distribution, we finally obtain
$$
\lim_{N\to \infty}\lim_{t\to
\infty} \mathbb{P}\left(X_1(t) = n\right) = \frac{\mu^n\,\expo^{-\mu}}{n!}.
$$
This finishes the proof of theorem \ref{thm2}. \end{proof}

\section{Discussion}
\label{sec:discuss}

In this manuscript, we have introduced the binomial
reshuffling model. We proved that, in the mean-field limit, the distribution of
wealth under this model converges to the Poisson distribution. In the context
of econophysics, this model is particularly natural due to the connection with
coin flipping: agents redistribute their combined wealth by flipping a sequence
of fair coins. We managed to show a quantitative large time convergence result
to a Poisson equilibrium distribution for the solution of \eqref{eq:ODE} thanks
to a coupling argument.

\begin{figure}[ht]
  \centering
  \includegraphics[width=.97\textwidth]{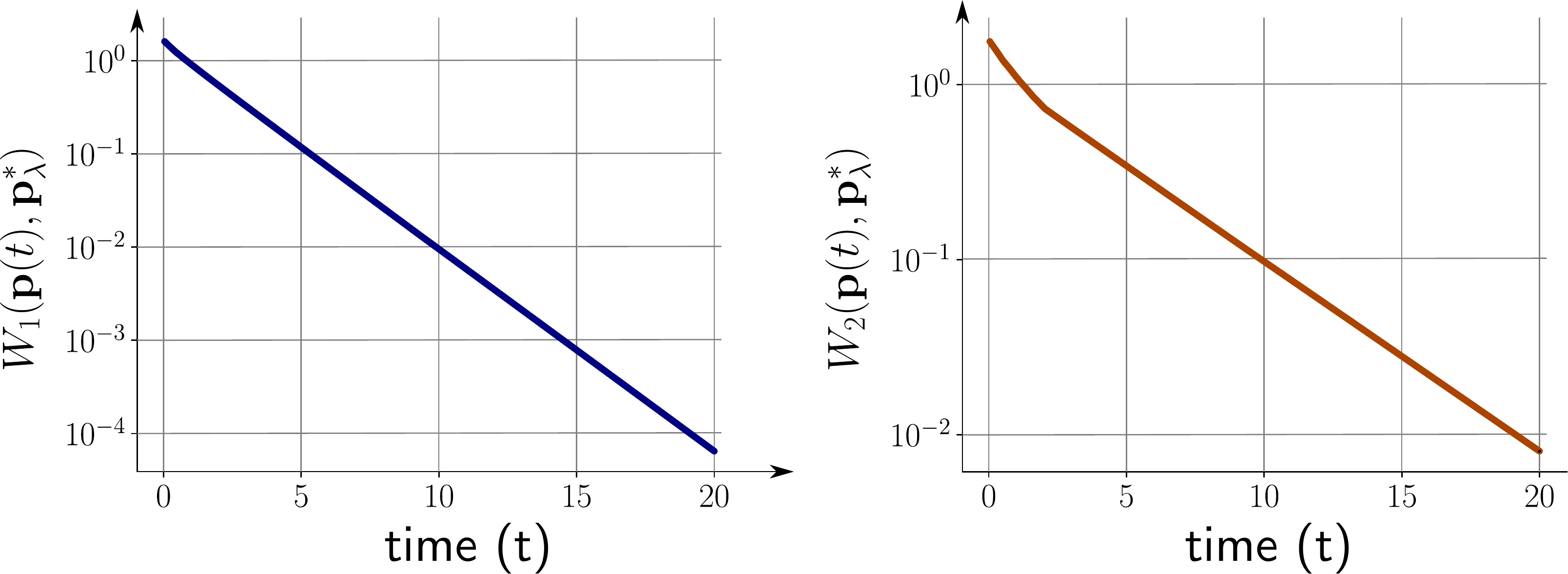}
  \caption{Starting with the initial probability distribution $\mathbf{p}(0)$ illustrated in Figure \ref{fig:simu_agent_based}, we solve the mean-field limit \eqref{eq:ODE}
numerically and plot the distance to the equilibrium Poisson distribution in
the $W_1$ metric (left) and the $W_2$ metric (right). We observe that the convergence is numerically exponentially fast in both metrics.}
  \label{fig:simu_cv_equlibrium}
\end{figure}

In an attempt to determine if the rate established in Theorem \ref{thm1} can be
improved, we can approximate the solution to the
ODE system \eqref{eq:ODE} numerically. We  start the initial probability
distribution $\mathbf{p}(0)$ illustrated in Figure \ref{fig:simu_agent_based} that has mean
$\lambda = 5.15$. Since this initial distribution is supported on
$\{0,\ldots,10\}$, its behavior over reasonable amounts of time can be
approximated by probability vectors $\mathbf{p}(t) = (p_0,\ldots,p_{55})$
truncated at $n=55$. Indeed, when $\lambda=5.15$, we have $p_\lambda^*(56) =
\lambda^{56} \expo^{-\lambda}/(56!) \approx 10^{-34}$ which is approximately equal
to the relative precision of Quadruple-precision floating-point numbers (which
is how we represent real numbers for the numerics). The system of ODEs was
solved using a fourth-order Runge-Kutta method. The $W_1$ and $W_2$ metric are
straightforward to compute for $1$-dimensional probability distribution,
indeed, if $F$ and $G$ denote the cumulative
distribution function of $\mathbf{p}$ and $\mathbf{q}$, respectively, then
$$
W_p(\mathbf{p},\mathbf{q}) = \left( \int_0^1 |F^{-1}(z) - G^{-1}(z)|^p \dd z \right)^{1/p},
$$
where the inverse of the cumulative distribution function is defined by
$F^{-1}(z) = \min \{ k \in \mathbb{N} : F(k) \ge z\}$, see for example
\cite[Remark 2.30]{COTFNT}. We plot the results in Figure \ref{fig:simu_cv_equlibrium}. Since
the $W_1$ and $W_2$ metrics are decreasingly linearly in the log scale of the
Figure, the numerics suggest that it may be possible to improve the converge
rate estimate of Theorem \ref{thm1} to exponential convergence, at least for
some initial probability distributions. We leave this as an open problem.

Several other open questions still remain to be solved in future work. For
instance, it seems very hard to find a natural Lyapunov functional associated
with the Boltzmann-type evolution equation \eqref{eq:ODE}, which is pretty
weird since for most of the classical econophysics models (see for instance
those studied in
\cite{cao_derivation_2021,cao_entropy_2021,cao_explicit_2021,cao_uncovering_2022,matthes_steady_2008,naldi_mathematical_2010})
natural Lyapunov functionals do exist.

\subsection*{Acknowledgment} It is a great pleasure to express our gratitude to Sebastien Motsch for many helpful suggestions. We would also like to thank Augusto Santos for his answer to a question of Fei Cao on MathOverflow \cite{santos_convergence_2022}, where a detailed proof of Lemma \ref{eq:preliminary_DS} is provided. Nicholas F. Marshall was supported in part by NSF DMS-1903015. This work was initiated as the AMS MRC conference on Data Science at the Crossroads of Analysis, Geometry, and Topology.

\begin{appendix}

\section{Convergence to Poisson via Laplace transform}\label{appendix}

We include here another (although qualitative) approach for proving the large time convergence of the solution of the ODE system \eqref{eq:ODE} to the Poisson equilibrium, based on the application of Laplace transform. The primary motivation to present the Laplace transform approach lies in the emergence of a surprising connection between the convergence problem at hand and a closely related dynamical system. Indeed, we will need the following preliminary result on a specific dynamical system, which seems to be interesting in its own right.

\begin{lemma}\label{eq:preliminary_DS}
Assume that the following infinite dimensional ODE system
\begin{equation}\label{eq:ODE2}
a'_n(t) = a^2_{n+1}(t) - a_n(t),~~~{n \in \mathbb N}
\end{equation}
admits a unique (smooth in time) solution, whose initial datum $\{a_n(0)\}_{n\geq 0}$ satisfies $a_n(0) < a^2_{n+1}(0)$ for all $n$ and $\expo^{-\mu_1\,2^{-n}} \leq a_n(0) \leq \expo^{-\mu_2\,2^{-n}}$ for all large enough $n$ where $\mu_1,\mu_2 \in \mathbb{R}_+$. Then there exists some $\mu \in [\mu_1,\mu_2]$ such that $a_n(t) \xrightarrow{t\to \infty} \expo^{-\mu\,2^{-n}}$ for all $n \in \mathbb N$.
\end{lemma}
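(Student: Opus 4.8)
The plan is to treat \eqref{eq:ODE2} as a \emph{cooperative} (quasimonotone) infinite system and to combine an order/invariance argument in the tail with a contraction argument for the remaining finitely many components. The starting observation is that $\partial a_n'/\partial a_{n+1} = 2a_{n+1}\ge 0$ whenever $a_{n+1}\ge 0$, so on the natural region the flow is order preserving, and that the one-parameter family $a_n=\expo^{-\mu\,2^{-n}}$ is exactly the set of nonnegative equilibria, since $\big(\expo^{-\mu 2^{-(n+1)}}\big)^2=\expo^{-\mu 2^{-n}}$; these equilibria are ordered decreasingly in $\mu$. The crucial structural feature is that \eqref{eq:ODE2} is \emph{upper triangular}: $a_n'$ depends only on $a_n$ and $a_{n+1}$, so for any $N_0$ the tail $\{a_n\}_{n\ge N_0}$ evolves as a closed subsystem. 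I would fix $N_0$ large enough that the two-sided bound $\expo^{-\mu_1 2^{-n}}\le a_n(0)\le \expo^{-\mu_2 2^{-n}}$ holds for all $n\ge N_0$, and carry out the convergence analysis first on this tail, where all data lie in $(0,1)$ and no sign issues arise.

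On the tail I would record two monotonicity facts. First, the box $\prod_{n\ge N_0}[0,\expo^{-\mu_2 2^{-n}}]$ is forward invariant: on the lower face $a_n=0$ one has $a_n'=a_{n+1}^2\ge 0$, while on the upper face $a_n=\expo^{-\mu_2 2^{-n}}$ one has $a_n'=a_{n+1}^2-\expo^{-\mu_2 2^{-n}}\le \big(\expo^{-\mu_2 2^{-(n+1)}}\big)^2-\expo^{-\mu_2 2^{-n}}=0$, so since $a_n(0)$ lies in the box for $n\ge N_0$, so does $a_n(t)$ for all $t$; in particular $0\le a_n(t)\le \expo^{-\mu_2 2^{-n}}$. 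Second, differentiating \eqref{eq:ODE2} in $t$ shows $v_n:=a_n'$ solves the linear cooperative equation $v_n'=2a_{n+1}(t)\,v_{n+1}-v_n$ with $v_n(0)=a_{n+1}^2(0)-a_n(0)>0$ by hypothesis, whence $v_n(t)\ge 0$ and each $a_n(t)$ is nondecreasing in $t$. Being nondecreasing and bounded above, $a_n(t)\nearrow a_n^\infty\in[a_n(0),\expo^{-\mu_2 2^{-n}}]$ for every $n\ge N_0$. Since $a_n'(t)=a_{n+1}^2(t)-a_n(t)\to (a_{n+1}^\infty)^2-a_n^\infty$ while $a_n(t)$ itself converges, the limit of the derivative must vanish, giving $a_n^\infty=(a_{n+1}^\infty)^2$. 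Writing $c_n=-\log a_n^\infty$, which is finite and positive because $\expo^{-\mu_1 2^{-n}}\le a_n^\infty\le \expo^{-\mu_2 2^{-n}}$, turns this into $c_n=2c_{n+1}$, so $c_n=\mu\,2^{-n}$ for a single constant $\mu$, i.e. $a_n^\infty=\expo^{-\mu 2^{-n}}$ on the tail; the same bounds force $\mu_2\le\mu\le\mu_1$, the asserted range.

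It then remains to propagate convergence to the indices $n<N_0$, which I would do by a descending induction requiring no sign information on the early components. Assuming $a_{n+1}(t)\to \expo^{-\mu 2^{-(n+1)}}$, the scalar equation $a_n'=a_{n+1}^2(t)-a_n$ is a linear relaxation $a_n'=f_n(t)-a_n$ whose forcing $f_n(t)=a_{n+1}^2(t)$ tends to $\expo^{-\mu 2^{-n}}$; solving by Duhamel, $a_n(t)=a_n(0)\,\expo^{-t}+\int_0^t \expo^{-(t-s)}f_n(s)\,\dd s\to \expo^{-\mu 2^{-n}}$, independently of $a_n(0)$. Descending from $n=N_0-1$ down to $n=0$ then yields $a_n(t)\to \expo^{-\mu 2^{-n}}$ for all $n$, as claimed.

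The main obstacle I anticipate is making the tail monotonicity step fully rigorous in infinite dimensions: one must justify box invariance and the nonnegativity of $v_n$ for the coupled infinite tail rather than quoting the finite-dimensional Kamke--M\"uller comparison theory verbatim. The upper-triangular coupling together with the a priori bound $a_n\in[0,\expo^{-\mu_2 2^{-n}}]$, which makes the nonlinearity globally Lipschitz on the relevant region, should reduce this to routine estimates, either through a direct Gronwall argument exploiting the triangular structure or through a finite truncation followed by a stability-of-solutions limit; but it is the step needing genuine care, alongside the standard subtlety of upgrading ``the derivative converges along a convergent trajectory'' to ``the derivative tends to zero.''
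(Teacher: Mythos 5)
Your proposal is correct and follows essentially the same route as the paper's (sketched) argument: forward invariance of the ordered region, monotonicity in time coming from the hypothesis $a_n(0)<a_{n+1}^2(0)$, the upper-triangular dependence making the tail a closed subsystem, and sandwiching between the equilibria $\expo^{-\mu_1 2^{-n}}$ and $\expo^{-\mu_2 2^{-n}}$. If anything, your write-up is more explicit than the paper's sketch (which defers the details to the cited MathOverflow answer), notably in identifying $\mu$ via the fixed-point relation $a_n^\infty=(a_{n+1}^\infty)^2$ and in handling the finitely many indices $n<N_0$ by Duhamel's formula.
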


\begin{proof} We will only provide a sketch of the proof here and refer to \cite{santos_convergence_2022} for a detailed argument. We first notice that the infinite dimensional cube $[0,1]^{\mathbb N}$ is invariant under the evolution of $\{a_n(t)\}_{n\geq 0}$, i.e., if $a_n(0) \in [0,1]$ for all $n \in \mathbb N$, then $a_n(t) \in [0,1]$ for all $n \in \mathbb N$ and all $t\in \mathbb{R}_+$. Moreover, the tail of the initial condition fully determines the asymptotic behavior of the system \eqref{eq:ODE2} since the state variable $a_m$ impacts the evolution of $a_n$ as long as $m > n$ but not vice versa. Furthermore, the solution of \eqref{eq:ODE2} enjoys a nice monotonicity property: If $\{\bar{a}_n\}_{n\geq 0}$ is another solution of \eqref{eq:ODE2} whose initial datum $\{\bar{a}_n(0)\}_{n \geq 0}$ satisfies $\bar{a}_n(0) \geq a_n(0)$ for all $n$, then $\bar{a}_n(t) \geq a_n(t)$ for all $n \in \mathbb N$ and $t\geq 0$. In particular, if there exists some $N \in \mathbb N$ for which $\expo^{-\mu_1\,2^{-n}} \leq a_n(0) \leq \expo^{-\mu_2\,2^{-n}}$ holds whenever $n \geq N$, then we must have $[\liminf_{t \to \infty} a_n(t), \limsup_{t \to \infty} a_n(t)] \in [\expo^{-\mu_1\,2^{-n}},\expo^{-\mu_2\,2^{-n}}]$ for all $n$. Lastly, the advertised conclusion follows from another monotonicity property of the solution of \eqref{eq:ODE2}: if $a_n(0) < a^2_{n+1}(0)$ for all $n$, then $a_n(t) \geq a_n(s)$ for all $t \geq s$ and all $n$.
\end{proof}

Armed with Lemma \ref{eq:preliminary_DS}, we are able to demonstrate the convergence of the solution of \eqref{eq:ODE} to the Poisson distribution by virtue of the Laplace transform.

\begin{proposition}\label{prop:Appendix}
Assume that ${\mathbf{p}}(t) = \{p_n(t)\}_{n \geq 0}$ is a classical (and global in time) solution of the system \eqref{eq:ODE} with a initial probability mass function ${\mathbf{p}}(0)$ having mean value $\mu$, then ${\mathbf{p}}(t) \xrightarrow{t \to \infty} {\mathbf{p}}^*_\lambda$.
\end{proposition}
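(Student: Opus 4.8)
The plan is to pass to the probability generating function and thereby convert the nonlinear system \eqref{eq:ODE} into the scalar dynamical system \eqref{eq:ODE2} analyzed in Lemma \ref{eq:preliminary_DS}. Writing $G(t,z) = \sum_{n\ge 0} p_n(t)\,z^n$ for $z\in[0,1]$, I would multiply \eqref{eq:ODE} by $z^n$, sum over $n$, and use the computation behind Lemma \ref{lem1} (the generating function of $B\circ(X+Y)$ is $G(t,\tfrac{1+z}{2})^2$ when $X,Y$ are i.i.d.\ with law $\mathbf{p}(t)$) to obtain the closed evolution $\partial_t G(t,z) = G(t,\tfrac{1+z}{2})^2 - G(t,z)$. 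The crucial observation is that the affine map $\phi(z)=\tfrac{1+z}{2}$ iterates to $\phi^n(z_0)=1-(1-z_0)2^{-n}$, so that for any fixed $z_0\in[0,1)$ the sequence $a_n(t):=G(t,\phi^n(z_0))$ obeys $\partial_t a_n = a_{n+1}^2 - a_n$, i.e.\ exactly \eqref{eq:ODE2}, since $\tfrac{1+\phi^n(z_0)}{2}=\phi^{n+1}(z_0)$. Under this same substitution the Poisson generating function $\expo^{\mu(z-1)}$ corresponds to the equilibrium profile $a_n=\expo^{-\mu(1-z_0)2^{-n}}$ of \eqref{eq:ODE2}.

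Next I would record the inputs needed to invoke Lemma \ref{eq:preliminary_DS}. Since $z_0\in[0,1)$ we have $\phi^n(z_0)\in[0,1)$, so $a_n(t)\in[0,1]$ stays in the invariant cube. Because $\mathbf{p}(0)$ has finite mean $\mu$, Abel's theorem gives $G(0,1)=1$ and $\partial_z G(0,1)=\mu$, hence the first-order expansion $G(0,1-\epsilon)=1-\mu\epsilon+o(\epsilon)$. Applying this with $\epsilon=(1-z_0)2^{-n}$ yields the tail asymptotics $-2^n\log a_n(0)\to\mu(1-z_0)$ as $n\to\infty$. Consequently, for every $\delta>0$ there is an $N$ with $\expo^{-(\mu(1-z_0)+\delta)2^{-n}}\le a_n(0)\le \expo^{-(\mu(1-z_0)-\delta)2^{-n}}$ for all $n\ge N$, which are precisely the two-sided tail bounds required by Lemma \ref{eq:preliminary_DS}, with $\mu_1=\mu(1-z_0)+\delta$ and $\mu_2=\mu(1-z_0)-\delta$.

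I would then invoke the sandwiching estimate established inside the proof of Lemma \ref{eq:preliminary_DS}, namely that such tail bounds force $\big[\liminf_{t\to\infty}a_n(t),\ \limsup_{t\to\infty}a_n(t)\big]\subset[\expo^{-\mu_1 2^{-n}},\expo^{-\mu_2 2^{-n}}]$ for every $n$, and let $\delta\to 0$ to conclude $a_n(t)\to\expo^{-\mu(1-z_0)2^{-n}}$; in particular $a_0(t)=G(t,z_0)\to\expo^{\mu(z_0-1)}$. As $z_0\in[0,1)$ is arbitrary, the generating functions converge pointwise on $[0,1)$ to the Poisson generating function, and the continuity theorem for probability generating functions upgrades this to $p_n(t)\to p^*_{\mu,n}$ for every $n$, i.e.\ $\mathbf{p}(t)\to\mathbf{p}^*_\mu$; here $\lambda=\mu$ because the mean is conserved by Lemma \ref{prop1}.

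The main obstacle is the hypothesis $a_n(0)<a_{n+1}^2(0)$ in Lemma \ref{eq:preliminary_DS}: in the generating-function variable this reads $G(0,w)<G(0,\tfrac{1+w}{2})^2$ at $w=\phi^n(z_0)$, and it can genuinely fail. A short computation shows that $\psi(w):=G(0,\tfrac{1+w}{2})^2-G(0,w)$ satisfies $\psi(1)=\psi'(1)=0$ and $\psi''(1)=\tfrac{\mu-\sigma^2}{2}$, so for overdispersed data ($\sigma^2>\mu$) one has $\psi<0$ near $w=1$, and since $\phi^n(z_0)\to 1$ the monotonicity hypothesis cannot be arranged for any $z_0$. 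The way around this is to not rely on the monotone-in-$t$ convergence part of Lemma \ref{eq:preliminary_DS} (which is what that hypothesis secures), but only on its two-sided squeeze, which uses solely the order-preserving comparison principle, the invariance of $[0,1]^{\mathbb N}$, and the fact that the tail of the initial datum determines the asymptotics, none of which needs $a_n(0)<a_{n+1}^2(0)$. The delicate point is to verify carefully that this squeeze indeed holds without the monotonicity hypothesis and that the $o(\epsilon)$ error in the tail expansion is controlled well enough to send $\delta\to 0$.
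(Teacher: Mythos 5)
Your route is the same as the paper's: pass to the generating function, derive the functional equation $\partial_t \phi(x,t)+\phi(x,t)=\bigl(\phi(\tfrac{1+x}{2},t)\bigr)^2$ (the paper's \eqref{eq:Laplace_PDE}), restrict to a dyadic orbit of $x\mapsto\tfrac{1+x}{2}$ to land on the system \eqref{eq:ODE2}, and invoke Lemma \ref{eq:preliminary_DS}. Within that shared skeleton your execution differs in two ways that are genuine improvements. First, by seeding the orbit at an arbitrary $z_0\in[0,1)$ rather than at $0$, you obtain $\phi(z_0,t)\to\expo^{\mu(z_0-1)}$ for \emph{every} $z_0$ directly as the $n=0$ coordinate; the paper only gets convergence at the points $1-2^{-n}$, which accumulate solely at $1$, so its closing appeal to ``continuity in $x$'' does not by itself propagate the limit to all of $[0,1]$ --- your version repairs this. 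Second, you correctly diagnose that the paper's verification of the hypothesis $a_n(0)<a_{n+1}^2(0)$ via the inequality $1-2^{-n}\le(1-2^{-(n+1)})^2$ is a statement about the evaluation points, not about the values $\phi(\cdot,0)$; your computation $\psi(1)=\psi'(1)=0$, $\psi''(1)=\tfrac{\mu-\sigma^2}{2}$ is right, and it shows the hypothesis genuinely fails near $x=1$ for overdispersed initial data, so the lemma as stated cannot be applied verbatim. Your proposed repair --- use only the two-sided squeeze, which rests on the cooperative comparison principle, invariance of $[0,1]^{\mathbb N}$, and the cascading structure $a_n'=a_{n+1}^2-a_n$ (via $a_n(t)=\expo^{-t}a_n(0)+\int_0^t\expo^{-(t-s)}a_{n+1}^2(s)\,\dd s$), none of which uses the monotonicity hypothesis --- is sound, and the $\delta\to0$ limit of the two-sided tail bounds then pins down the limit. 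You are honest that this last step is not fully checked; since the paper's own Lemma \ref{eq:preliminary_DS} is only proved in sketch form by reference to an external source, that caveat applies equally to the published argument, and your version is if anything on firmer footing. The remaining steps (Abel's theorem for $\partial_x\phi(1,0)=\mu$, conservation of the mean from Lemma \ref{prop1} to identify $\lambda=\mu$, and the continuity theorem for probability generating functions to pass from pointwise convergence on $[0,1)$ back to the pmf) are all standard and correctly placed.
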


\begin{proof} For $x \in [0,1]$, let $\phi(x,t) = \sum_{n=0}^\infty p_n(t)\,x^n$ to be the Laplace transform of ${\mathbf{p}}(t)$, it suffices to establish the convergence
\begin{equation}\label{eq:Laplace_conv}
\phi(x,t) \xrightarrow{t\to \infty} \expo^{\mu(x-1)},
\end{equation}
since the function $\expo^{\mu(x-1)}$ is the Laplace transform of the Poisson distribution. We now show that $\phi(x,t)$ satisfies the following partial differential equation (PDE):
\begin{equation}\label{eq:Laplace_PDE}
\partial_t \phi(x,t) + \phi(x,t) = \left(\phi\left(\tfrac{1+x}{2}, t\right)\right)^2.
\end{equation}
Indeed, we have
\begin{align*}
\partial_t \phi(x,t) + \phi(x,t) &= \sum_{n=0}^\infty \sum_{k=0}^\infty \sum_{\ell=0}^\infty \binom{k+\ell}{n}\,\frac{p_k}{2^k}\,\frac{p_\ell}{2^\ell}\,x^n\, \mathbbm{1}_{\{k+\ell \geq n\}} \\
&= \sum_{N=0}^\infty \sum_{\substack{k,\ell = 0 \\ k+\ell = N}}^\infty \frac{p_k}{2^k}\,\frac{p_\ell}{2^\ell}\,\sum_{n=0}^N \binom{N}{n}\,x^n \\
&= \sum_{k =0}^\infty \sum_{k = 0}^\infty \frac{p_k}{2^k}\,\frac{p_\ell}{2^\ell}\,(1+x)^{k+\ell} = \left(\phi\left(\tfrac{1+x}{2}, t\right)\right)^2.
\end{align*}
We remark here that the PDE \eqref{eq:Laplace_PDE} is complemented with an initial datum $\phi(x,0)$ which satisfies $\phi(1,0) = 1$ and $\phi'(1,0) = \mu$. Moreover, due to the conservation of mass and mean (recall Lemma \ref{prop1}), we also have
\begin{equation}\label{eq:constraints}
\phi(1,t) \equiv 1, ~~~\textrm{and}~~~ \partial_x \phi(1,t) \equiv \mu ~~~ \textrm{for all $t \geq 0$.}
\end{equation}
If we set $a_n(t) = \phi(1-2^{-n},t)$ for all $n \in \mathbb N$ and all $t \in \mathbb{R}_+$, then \eqref{eq:Laplace_PDE} implies that $a'_n(t) = a^2_{n+1}(t) - a_n(t)$. Thanks to the constraint that $\partial_x \phi(1,t) \equiv \mu$, we also have $a_n(t) \approx 1 - \mu\,2^{-n}$ for all large $n$. Finally, the obvious observation that $1-2^{-n} \leq \left(1 - 2^{-(n+1)}\right)^2$ allows us to apply Lemma \eqref{eq:preliminary_DS}, and conclude that \[\phi(1-2^{-n},t) \xrightarrow{t\to \infty} \expo^{-\mu\,2^{-n}}~~~ \textrm{for all $n \in \mathbb N$}.\] Therefore, by the continuity of $\phi$ (with respect to $x$) we deduce the claimed convergence $\phi(x,t) \xrightarrow{t\to \infty} \expo^{\mu(x-1)}$.
\end{proof}

\end{appendix}

\end{document}